\newtheorem{theorem}{Theorem}
\newtheorem{prop}[theorem]{Proposition}
\newtheorem{example}[theorem]{Example}
\newtheorem{cor}[theorem]{Corollary}
\theoremstyle{remark}
\newtheorem{remark}[theorem]{Remark}
\newcommand\Area{ \mathsf{Area} }
\newcommand\Dinv{ \mathsf{Dinv} }
\newcommand\Skips{ \mathsf{Skip} }
\newcommand\area{ \mathsf{area} }
\newcommand\dinv{ \mathsf{dinv} }
\newcommand\skips{ \mathsf{skip} }
\newcommand\ides{ \mathsf{iDes} }
\newcommand\leg{ \mathsf{leg} }
\newcommand\arm{ \mathsf{arm} }
\newcommand\bounce{ \mathsf{bounce} }
\def \R {\mathbb R}
\def \Q {\mathbb Q}
\def \PF {\mathsf{PF}}
\renewcommand{\th}{^{\text{th}}}
\newcommand*\rfrac[2]{{}^{#1}\!/_{#2}}
\title{Rational Catalan Polynomials and Rank Words}
\author{Ryan Kaliszewski and Huilan Li}
\begin{document}
\maketitle

\begin{abstract} For $m,n$ coprime we introduce a new statistic $\skips$ on $(m,n)$-rational Dyck paths and give a fast way to compute $\dinv$ and $\skips$ statistics. We also introduce $(m,n)$-rank words, which are in one-to-one correspondence with $(m,n)$-Dyck paths. Defining an equivalence relation on pairs of certain ranks in a rank word, we prove that the number of equivalence classes is the $\skips$ of the rank word, and the $\skips$ of the corresponding Dyck path. We construct a homogeneous generating function $W_{m,n}(q,t,b)$ using statistics $\area, \dinv$ and $\skips$, where $W_{m,n}(q,t,1)=C_{m,n}(q,t)$, the rational Catalan polynomial. We then give an explicit formula for $(3,n)$-rational Catalan polynomials and prove they are $q,t$-symmetric.
\end{abstract}

\section{Introduction}

In the early 1990's Garsia and Haiman introduced an important sum of
functions in
$\Q(q,t)$, the $q,t$-Catalan polynomial $C_n(q,t)$,
which has since been shown to have interpretations in terms of algebraic
geometry and representation theory.
These classical $q,t$-Catalan polynomials are given by
\begin{align} C_n(q,t) & =\sum_{\pi}q^{\dinv(\pi)}t^{\area(\pi)} \\  \label{pt2}  &=\sum_{\pi}q^{\area(\pi)}t^{\bounce(\pi)},\end{align}
where the sums are over all Dyck paths $\pi$ from $(0,0)$ to $(n,n)$, with \eqref{pt2} due to \cite{Haglund03}.

It was proven that 
\[ C_n(q,t)=C_n(t,q) \]
algebraically; however, an involution on the set of Dyck paths that exchanges  $(\dinv,\area)$ or $(\area,\bounce)$ has yet to be discovered.  For an overview of the classical $q,t$-Catalan polynomials and Dyck paths, see \cite{Garsia96, Garsia02, Haglund05,Haglund08}.

Recently, a valuable generalization of the classic $q,t$-Catalan polynomial has come to light \cite{Hikita12}.  For positive integers $m,n$ that are coprime, these \emph{$(m,n)$-rational} $q,t$-Catalan polynomials have a similar description to the classic case
\begin{equation} C_{m,n}(q,t) =\sum_{\pi}q^{\dinv(\pi)}t^{\area(\pi)}, \end{equation}
where the sum is over all \emph{rational} Dyck paths $\pi$ from $(0,0)$ to $(m,n)$.

A rational Dyck path is a path in the $m\times n$ lattice which proceeds by north and east steps from $(0,0)$ to $(m,n)$ and which always remains weakly above the main diagonal $y=\frac{m}{n}x$.  The collection of cells lying above a Dyck path $\pi$ always forms an English Ferrers diagram $\lambda(\pi).$ 

The classic $q,t$-Catalan polynomials correspond to the cases $m=n+1$. It is an open conjecture whether the $(m,n)$-rational $q, t$-Catalan polynomials are symmetric in $q$ and $t$.  Gorsky and Mazin have proven the $q,t$-symmetry for the case when $m\leq 3$ without giving an explicit formula for the polynomials \cite{Gorsky14}.  Independently, Lee, Li and Loehr also have proven the $q,t$-symmetry when $m\leq 4$ \cite{LLL14}. In 2015, Kaliszewski and Li introduced a new statistic, $\skips$, on rational $(3,n)$-Dyck paths, proved the $q,t$-symmetry  and gave an explicit formula of $(m,n)$-rational $q, t$-Catalan polynomials for $m=3$ \cite{Kalis15}.

Also in \cite{Hikita12}, Hikita extended parking functions and their statistics to the $(m, n)$-rational case and defined polynomials 
\begin{equation} \label{hikitapoly} H_{m,n}(X;q,t)=\sum_{\PF}t^{\area(\PF)}q^{\dinv(\PF)}F_{\ides(\PF)}(X), \end{equation}
where the sum is over all parking functions $\PF$ over all $(m,n)$-Dyck paths and the $F$ are the (Gessel) fundamental quasisymmetric functions indexed by the inverse descent composition of the reading word of the parking functions $\PF$. When $m=2$ or $n=2$, the structure  of Hikita's polynomials  have been completely described by \cite{Leven14}.

Since, for each $(m,n)$-rational Dyck path there is a parking function on that path with the same statistics as the underlying Dyck path, the $(m,n)$-rational $q,t$-Catalan polynomial appears within $H_{m,n}(X;q,t)$.  Specifically, $C_{m,n}(q,t)$ is the coefficient of $F_{(1^n)}$ in the expansion of the Hikita's polynomial $H_{m,n}(X;q,t)$.  Thus, describing the structure of the $(m,n)$-rational $q,t$-Catalan polynomials is the first step to uncovering the structure of Hikita's polynomials.

In this paper we introduce a new set of objects, $(m,n)$-rank words and $\area, \dinv$ and $\skips$ statistics on the $(m,n)$-rank words.  We construct the homogeneous generating function
\begin{equation} W_{m,n}(b,q,t)=\sum_w b^{\skips(w)}q^{\dinv(w)}t^{\area(w)}, \end{equation}
where the sum is over all $(m,n)$-rank words $w$.  Then we prove two useful theorems:

\begin{theorem} If $m,n$ are positive integers that are coprime then
\[ W_{m,n}(1,q,t)=C_{m,n}(q,t). \]
\end{theorem}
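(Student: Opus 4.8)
The plan is to reduce the identity to a statistic-preserving bijection between the two index sets. Setting $b=1$ in the defining sum simply erases the $\skips$-grading, so
\[ W_{m,n}(1,q,t)=\sum_{w}q^{\dinv(w)}t^{\area(w)}, \]
the sum running over all $(m,n)$-rank words $w$, while by definition $C_{m,n}(q,t)=\sum_{\pi}q^{\dinv(\pi)}t^{\area(\pi)}$ runs over all $(m,n)$-Dyck paths $\pi$. It therefore suffices to produce a bijection $w\mapsto\pi_{w}$ from $(m,n)$-rank words onto $(m,n)$-Dyck paths with $\area(w)=\area(\pi_{w})$ and $\dinv(w)=\dinv(\pi_{w})$; re-indexing the first sum along this bijection then gives the second.

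First I would invoke the correspondence between rank words and Dyck paths constructed earlier, which recovers the step data of a path rank by rank, and recall that it is a bijection. Next I would check compatibility of $\area$: the area of an $(m,n)$-Dyck path is the number of full lattice cells trapped between the path and the main diagonal, and the rank word records exactly this quantity, so $\area(w)=\area(\pi_{w})$ amounts to unwinding the definitions. The substantive point is $\dinv$. The $\dinv$ of a rational Dyck path is the number of cells $c$ above the path whose arm and leg satisfy the rational balance inequality relating $\arm(c)$, $\leg(c)$ and the slope $\frac{m}{n}$; I would translate this cell-by-cell condition on the Ferrers diagram $\lambda(\pi)$ into a condition on pairs of letters of the rank word, and then verify that the intrinsic $\dinv(w)$ counts precisely those pairs.

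The hard part will be this last translation: the arm/leg inequality is genuinely two-dimensional, living on the shape above the path, whereas passing to the rank word linearizes the path by rank, so one must show that the way the rank word pairs up rank values matches, one for one, the cells that contribute to the path's $\dinv$. This is presumably already isolated as a lemma in the preceding section; granting it, together with the $\area$ identity and the bijectivity, the theorem is immediate, since substituting $b=1$ and re-indexing is the only remaining step and requires no further $q,t$ bookkeeping.
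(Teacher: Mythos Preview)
Your overall strategy is exactly the paper's: set $b=1$, invoke the bijection $w\mapsto\Pi(w)$ between $(m,n)$-rank words and $(m,n)$-Dyck paths established in the preceding section, and then cite the equalities $\area(w)=\area(\Pi(w))$ and $\dinv(w)=\dinv(\Pi(w))$ to re-index one sum as the other. That much is identical to the paper's proof of this theorem, which is a three-line application of results already in hand.

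Where your write-up drifts is in locating the ``hard part.'' You anticipate having to translate the arm/leg inequality for $\dinv$ of a path into an intrinsic pair-count on the rank word. In this paper no such translation is made: $\dinv(w)$ is \emph{defined} by the homogeneity relation
\[
\dinv(w)=\tfrac{(m-1)(n-1)}{2}-\area(w)-\skips(w),
\]
so once $\area(w)=\area(\Pi(w))$ and $\skips(w)=\skips(\Pi(w))$ are known, $\dinv(w)=\dinv(\Pi(w))$ is automatic. The substantive lemma proved beforehand is therefore the $\skips$ equality (via the equivalence relation on pairs of letters and the bijection $\varphi$ to $\Skips(\pi)$), not a $\dinv$ equality. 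Your plan is not wrong in spirit---the arm/leg work does happen, via the fast $\dinv$ theorem and its corollaries, inside the proof that $\skips$ matches---but you should credit the right statistic and not suggest there is an independent pair definition of $\dinv(w)$ to be reconciled.
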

\begin{theorem} If $n$ is a positive integer that is not divisible by $3$ then
\[ W_{3,n}(b,q,t)=\sum_{i=0}^{\lfloor n/3 \rfloor} b^i\cdot s_{n-1-2i,i}(q,t), \]
where $\{s_\lambda\}_\lambda$ is the Schur basis for symmetric functions.
\end{theorem}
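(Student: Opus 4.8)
The plan is to prove the stronger three‑variable identity by sorting the $(3,n)$-rank words into classes according to the value of $\skips$ and computing the $(\dinv,\area)$ generating polynomial of each class separately. Throughout I would use that $W_{3,n}$ is homogeneous, i.e.\ that $\area(w)+\dinv(w)+\skips(w)=n-1$ for every $(3,n)$-rank word $w$ — this is precisely what makes $W_{3,n}$ homogeneous of degree $n-1$, matching the degree of $b^{i}s_{n-1-2i,i}(q,t)$ — taking this either from the construction of $W_{m,n}$ or reading it off the fast computation of $\dinv$ and $\skips$. Because of this homogeneity it is enough to understand, for each $i$, the set $R_i:=\{w:\skips(w)=i\}$ together with the multiset of values $\{\dinv(w):w\in R_i\}$, since the $\area$-exponent of $w\in R_i$ is then forced to be $n-1-i-\dinv(w)$.

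The combinatorial core is to describe $R_i$ and $\dinv|_{R_i}$ explicitly. I would write a $(3,n)$-Dyck path as $N^{b_1}EN^{b_2}EN^{b_3}E$ with $b_1+b_2+b_3=n$, $b_1\ge\lceil n/3\rceil$ and $b_1+b_2\ge\lceil 2n/3\rceil$, pass to the corresponding rank word, and express $\skips$ and $\dinv$ as functions of $(b_1,b_2,b_3)$ using the algorithm of the earlier section. The aim is to prove three facts: (i) $\skips$ takes exactly the values $0,1,\dots,\lfloor n/3\rfloor$; (ii) for each such $i$, $R_i$ consists of exactly $n-3i$ rank words, linearly ordered by a single residual parameter; and (iii) along that parameter $\dinv$ increases by $1$ at each step, its extreme values on $R_i$ being $i$ and $n-1-2i$. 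Fact (iii) is equivalent to saying $\area$ runs through $n-1-2i,\ n-2-2i,\ \dots,\ i$ without repetition.

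Granting (i)--(iii), the computation closes immediately: for each $i$,
\[
\sum_{w\in R_i} q^{\dinv(w)}t^{\area(w)}
=\sum_{k=i}^{n-1-2i} q^{k}t^{n-1-i-k}
=\frac{q^{n-2i}t^{i}-q^{i}t^{n-2i}}{q-t}
= s_{n-1-2i,\,i}(q,t),
\]
the last step being the two-row Weyl/bialternant formula for Schur functions. Multiplying by $b^{i}$ and summing over $0\le i\le\lfloor n/3\rfloor$ gives $W_{3,n}(b,q,t)=\sum_{i=0}^{\lfloor n/3\rfloor} b^{i}s_{n-1-2i,i}(q,t)$. As a sanity check, setting $b=1$ and invoking Theorem~1 recovers $C_{3,n}(q,t)=\sum_{i}s_{n-1-2i,i}(q,t)$ and hence the $q,t$-symmetry of $C_{3,n}$.

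The main obstacle is fact (iii): controlling how $\dinv$ (equivalently $\area$) varies within a single skip-class and verifying that it sweeps an entire interval of consecutive integers exactly once. This forces one to unwind the definitions of the new statistics in the case $m=3$ in fine detail — identifying which moves on $(b_1,b_2,b_3)$ preserve $\skips$, and checking that each such move changes $\dinv$ by exactly $\pm1$, with the two endpoints of the chain realizing the extreme values $\dinv=i$ and $\dinv=n-1-2i$. Everything else — the homogeneity identity, the range of $\skips$, the sizes $|R_i|=n-3i$, the bialternant evaluation, and the final summation — should be routine bookkeeping once this monotonicity is secured.
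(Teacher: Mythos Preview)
Your plan is correct and is essentially the paper's own approach: stratify by $\skips=i$, show each stratum contributes $s_{n-1-2i,i}(q,t)$, and sum. The paper packages your facts (i)--(iii) as a bijection between $(3,n)$-rank words and \emph{Dyck triples} $(a,d,s)$ with $a+d+s=n-1$ and $s\le\min\{a,d\}$, proved via the rank word construction algorithm together with a case analysis (their $k,\ell$ are your column heights $b_1,b_2$ up to relabeling); once that bijection is in hand, the parametrization $a_j(i)=n-1-2i-j$, $d_j(i)=i+j$ for $0\le j\le n-1-3i$ is exactly your claim that $\dinv$ sweeps the interval $[i,\,n-1-2i]$ once, and the Schur identification is the same one-line computation you wrote.
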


This not only proves that the $(3,n)$-rational $q,t$-Catalan polynomials are $q,t$-symmetric, but also completely describes them:
\begin{cor}
\[ C_{3,n}(q,t)=\sum_{i=0}^{\lfloor n/3 \rfloor} s_{n-1-2i,i}(q,t).\]
\end{cor}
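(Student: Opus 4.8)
The plan is to obtain the Corollary as an immediate specialization of the two theorems stated just above it, so essentially no new argument is needed beyond assembling them correctly. First I would note that the hypothesis ``$n$ not divisible by $3$'' is precisely the condition $\gcd(3,n)=1$, so Theorem~1 (in the case $m=3$) and Theorem~2 both apply to the very same polynomial $W_{3,n}(b,q,t)$ introduced in the paper.

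Next I would evaluate the identity of Theorem~2 at $b=1$. On the left this produces $W_{3,n}(1,q,t)$, and on the right the factors $b^i$ all become $1$, leaving $\sum_{i=0}^{\lfloor n/3\rfloor} s_{n-1-2i,i}(q,t)$. Then I would invoke Theorem~1 with $m=3$, which says $W_{3,n}(1,q,t)=C_{3,n}(q,t)$; chaining the two equalities yields the desired formula for $C_{3,n}(q,t)$. For the parenthetical claim about $q,t$-symmetry I would simply observe that for any partition $\lambda$ the Schur polynomial $s_\lambda(q,t)$ in the two variables $q,t$ is a symmetric polynomial, hence unchanged under interchanging $q$ and $t$; a sum of such terms inherits the same invariance, so $C_{3,n}(q,t)=C_{3,n}(t,q)$.

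There is no real obstacle in the Corollary itself: all of the substance lies upstream, in the construction of $W_{m,n}$ via the $\area$, $\dinv$, and $\skips$ statistics on $(m,n)$-rank words, in Theorem~1's identification $W_{m,n}(1,q,t)=C_{m,n}(q,t)$, and above all in Theorem~2's explicit Schur expansion of $W_{3,n}$. The only point requiring a word of care is that the $b=1$ specialization is legitimate termwise, which it is because $W_{3,n}$ is a polynomial in $b$, and that the two index ranges agree, which is immediate since the right-hand side of Theorem~2 already runs over $0\le i\le\lfloor n/3\rfloor$.
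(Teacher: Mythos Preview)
Your proposal is correct and follows essentially the same approach as the paper: the Corollary is obtained by specializing Theorem~2 at $b=1$ and then invoking Theorem~1 with $m=3$ to identify $W_{3,n}(1,q,t)$ with $C_{3,n}(q,t)$. Your added remark on $q,t$-symmetry via the symmetry of Schur polynomials is likewise exactly what the paper records separately as its final corollary.
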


\subsection{Rational Dyck Paths}

Suppose that $m,n$ are positive coprime integers.  Construct the \emph{$(m,n)$-lattice} by drawing a rectangular integer lattice in $\R^2$ whose southwest corner lies on the origin and whose northeast corner lies on the point $(m,n)$.  The cell whose northeast corner lies on the point $(u,v)$ will be referred to as cell $(u,v)$.  Thus we can define the $i\th$ row as the set of cells
\[ \{(u,i)|1\leq u\leq n\} \]
and the $j\th$ column as the set of cells
\[ \{(j,v)|1\leq v\leq m\}. \]
The \emph{$(m,n)$-diagram} is the $(m,n)$-lattice where each cell $(u,v)$ contains an integer $a$ that satisfies
\begin{equation} a = mn-un-(n+1-v)m. \end{equation}
We call $a$ the \emph{rank} of cell $(u,v)$, denoted by $\gamma_{m,n}(u,v)$ or simply $\gamma(u,v)$ when the parameters are clear from context.  Since $m$ and $n$ are coprime, there are no duplicate ranks.

An \emph{$(m,n)$-rational Dyck path} or $(m,n)$-Dyck path is path on the $(m,n)$-diagram that begins at $(0,0)$ and ends at $(m,n)$.  The path can only consist of northward and eastward steps and must always lie above the diagonal $y=\frac{n}{m}x$.

An $(m,n)$-Dyck path partitions the cells within the $(m,n)$-diagram into two sets.  Since the path must lie above the diagonal, one of the sets will always contain the southeast corner of the $(m,n)$-diagram.  We say that any of the cells in this set are \emph{below the path}.  The other set of cells, which may be empty, are \emph{above the path}.  A rank is \emph{above the path} if it is in a cell that is above the path, or \emph{below the path} if it is in a cell that is below the path.

\begin{example} A $(4,7)$-Dyck path:
\[ \pi=\dyckpath{4}{7}{0,0,0,0,0,2,2}{3cm}\;.\]
The cells that are above the path are colored gray and those that are below the path are colored cyan.  Rank 6 is above the path, while rank 5 is below the path.

Here $\gamma(1,5)=9$ and $\gamma(4,3)=-20$.

\end{example}

The set of cells above the path always has the shape of a Ferrers diagram (in English notation).  This is because the only allowed moves are northward steps and eastward steps, so the number of cells above the path in each row must be weakly increasing from bottom to top.

\subsection{Fast $\dinv$ and $\skips$ on Dyck paths}

 Using both the path and ranks we can improve the computation of the $\dinv$ statistic and we will define a new statistic, $\skips$,  on $(m,n)$-Dyck paths.

Let $\pi$ be an $(m,n)$-Dyck path.  We partition the cells containing positive ranks into three sets, $\Area(\pi),\Dinv(\pi)$ and $\Skips(\pi)$.  Define 
\begin{equation} \Area(\pi)=\{(x,y):(x,y)\text{ is below path } \pi \text{ and contains a positive rank}\}. \end{equation}
For the cells above the path define
\begin{equation} \label{dinv} \Dinv(\pi)=\left\{(x,y):\frac{\arm(x,y)}{\leg(x,y)+1}<\frac{m}{n}<\frac{\arm(x,y)+1}{\leg(x,y)}\right\}, \end{equation}
where $\arm(x,y)$ is the number of cells above $\pi$ and strictly east of $(x,y)$ and $\leg(x,y)$ is the number of cells above $\pi$ and strictly south of $(x,y)$.  We interpret division by zero to give infinity.  Define the $\Skips$ set to be the remaining cells above the path:
\begin{equation} \Skips(\pi)=\{(x,y):(x,y)\not\in\Dinv(\pi) \}\;. \end{equation}
Set
\[ \area(\pi)=|\Area(\pi)|,\qquad \dinv(\pi)=|\Dinv(\pi)|, \qquad \skips(\pi)=|\Skips(\pi)|. \]
The idea is that when counting the cells that contribute to $\dinv$ we should \emph{skip} any that do not satisfy the inequality in~\eqref{dinv}.

The \emph{fast $\dinv$} algorithm gives a faster and simpler way to determine whether or not a cell is in $\Dinv$.  For a given cell $(x,y)$ above an $(m,n)$-Dyck path $\pi$, let $(x,y)^\downarrow$ be the southernmost cell in the same column as $(x,y)$ that is above the path and $(x,y)^\Downarrow$ be the northermost cell in the same column as $(x,y)$ that is below the path.  Similarly, let $(x,y)^\rightarrow$ be the easternmost cell in the same row as $(x,y)$ that is above the path and $(x,y)^\Rightarrow$ be the westernmost cell in the same row as $(x,y)$ that is below the path.  So $(x,y)^\Rightarrow$ is exactly one cell to the east of $(x,y)^\rightarrow$ and $(x,y)^\Downarrow$ is exactly one cell south of $(x,y)^\downarrow$.

\begin{theorem}[Fast $\dinv$] \label{fastdinvthm}
Suppose $\pi$ is an $(m,n)$-Dyck path and let $(x,y)$ be a cell above the path in $\pi$.  The cell $(x,y)$ is in $\Dinv(\pi)$ if and only if
\begin{equation} \label{fastdinv} \gamma[(x,y)^\rightarrow]>\gamma[(x,y)^\Downarrow] \qquad \text{and}\qquad \gamma[(x,y)^\downarrow]>\gamma[(x,y)^\Rightarrow]. \end{equation}
\end{theorem}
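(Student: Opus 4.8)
The plan is to turn the rational inequality defining $\Dinv(\pi)$ in~\eqref{dinv} into a comparison of explicit ranks, by rewriting everything in terms of the coordinates of the four cells appearing in~\eqref{fastdinv}. The starting observation is that the rank function simplifies: expanding the definition,
\[ \gamma(u,v) \;=\; mn - un - (n+1-v)m \;=\; m(v-1)-nu , \]
so $\gamma$ is affine, strictly increasing in the row index and strictly decreasing in the column index.

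Next I would fix a cell $(x,y)$ above $\pi$ and set up the dictionary. Since the cells above $\pi$ form an English Ferrers diagram, the cells above $\pi$ in row $y$ are exactly $(1,y),(2,y),\dots,(a,y)$ for some $a\ge x$, and the cells above $\pi$ in column $x$ are exactly $(x,b),(x,b+1),\dots,(x,n)$ for some $b\le y$. Hence $\arm(x,y)=a-x$ and $\leg(x,y)=y-b$, and directly from the definitions
\[ (x,y)^{\rightarrow}=(a,y),\ (x,y)^{\Rightarrow}=(a+1,y),\ (x,y)^{\downarrow}=(x,b),\ (x,y)^{\Downarrow}=(x,b-1). \]
At this point I would note that the two ``boundary'' cells really exist: a cell in the first row or the last column has rank $m(v-1)-nu<0$ for $1\le u\le m$, $1\le v\le n$, hence lies below the diagonal and so below $\pi$; since every cell above $\pi$ has positive rank (the path being weakly above the diagonal), $a\le m-1$ and $b\ge 2$, so $(a+1,y)$ and $(x,b-1)$ are genuine cells of the diagram. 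The degenerate value $\leg(x,y)=0$, i.e.\ $b=y$, is allowed and is handled by the convention that division by zero is infinity.

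The last step is a short computation. Substituting $\gamma(u,v)=m(v-1)-nu$ and the coordinates above, one finds that $\gamma[(x,y)^{\rightarrow}]>\gamma[(x,y)^{\Downarrow}]$ is equivalent to $m(\leg(x,y)+1)>n\,\arm(x,y)$, i.e.\ to the left inequality $\tfrac{\arm(x,y)}{\leg(x,y)+1}<\tfrac{m}{n}$ of~\eqref{dinv}; and that $\gamma[(x,y)^{\downarrow}]>\gamma[(x,y)^{\Rightarrow}]$ is equivalent to $m\,\leg(x,y)<n(\arm(x,y)+1)$, i.e.\ to the right inequality $\tfrac{m}{n}<\tfrac{\arm(x,y)+1}{\leg(x,y)}$ of~\eqref{dinv} (both sides of this last equivalence being automatically true when $\leg(x,y)=0$). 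Conjoining the two equivalences identifies the condition~\eqref{fastdinv} with membership of $(x,y)$ in $\Dinv(\pi)$, which is the theorem.

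Because the whole argument is a change of variables governed by the linear formula for $\gamma$, I do not anticipate a genuine obstacle. The only point that needs care is the geometric dictionary of the middle step — pinning down the locations of $(x,y)^{\rightarrow}$, $(x,y)^{\Rightarrow}$, $(x,y)^{\downarrow}$, $(x,y)^{\Downarrow}$, checking they lie in the diagram, and making sure the case $\leg(x,y)=0$ is consistent with the infinity convention — after which the two equivalences are immediate.
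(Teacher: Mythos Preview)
Your proposal is correct and follows essentially the same approach as the paper: both arguments reduce the rank comparisons in~\eqref{fastdinv} to the inequalities $m(\leg+1)>n\,\arm$ and $m\,\leg<n(\arm+1)$ via the affine form of $\gamma$, and then identify these with the two halves of~\eqref{dinv}. Your version is slightly more explicit in locating the four boundary cells and verifying they lie in the diagram, and in handling the $\leg(x,y)=0$ convention, but the substance is identical.
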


\begin{example} Consider cell $(1,7)$ of the $(5,7)$-Dyck path
\[ \pi=\dyckpath[(1,7)]{5}{7}{0,0,1,1,1,1,2}{3cm}. \]
We compute and get
$ \gamma[(1,7)^\rightarrow]=16>\gamma[(1,7)^\Downarrow]=-2\;. $
\[ \pi=\dyckpath[(2,7),(1,2)]{5}{7}{0,0,1,1,1,1,2}{3cm}. \]
But $ \gamma[(1,7)^\downarrow]=3<\gamma[(1,7)^\Rightarrow]=9\;. $
\[ \pi=\dyckpath[(3,7),(1,3)]{5}{7}{0,0,1,1,1,1,2}{3cm}. \]
So $(1,7)\not\in\Dinv(\pi)$.
\end{example}

\begin{proof} Let $\pi$ be an $(m,n)$-Dyck path.  Observe that for any cell $(x,y)$ above the path
\[ \gamma(x,y)-\gamma[(x,y)^\rightarrow]=n\cdot\arm(x,y),\qquad\gamma(x,y)-\gamma[(x,y)^\Rightarrow]=n\cdot(\arm(x,y)+1) \]
and
\[ \gamma(x,y)-\gamma[(x,y)^\downarrow]=m\cdot\leg(x,y),\qquad\gamma(x,y)-\gamma[(x,y)^\Downarrow]=m\cdot(\leg(x,y)+1)\;. \]
So
\begin{align*} \gamma[(x,y)^\rightarrow]<\gamma[(x,y)^\Downarrow] & \iff \gamma(x,y)-\gamma[(x,y)^\Downarrow]<\gamma(x,y)-\gamma[(x,y)^\rightarrow] \\
  & \iff m\cdot(\leg(x,y)+1)<n\cdot\arm(x,y) \\
  & \iff \frac{m}{n}<\frac{\arm(x,y)}{\leg(x,y)+1}.
\end{align*}
Similarly,
\begin{align*} \gamma[(x,y)^\downarrow]<\gamma[(x,y)^\Rightarrow] & \iff \gamma(x,y)-\gamma[(x,y)^\Rightarrow]<\gamma(x,y)-\gamma[(x,y)^\Downarrow] \\
  & \iff n\cdot(\arm(x,y)+1)<m\cdot\leg(x,y) \\
  & \iff \frac{\arm(x,y)+1}{\leg(x,y)}<\frac{m}{n}.
\end{align*}
Therefore inequalities \eqref{fastdinv} and the inequality in \eqref{dinv} are equivalent.
\end{proof}

\begin{cor} \label{fasterdinv} Suppose that $\pi$ is an $(m,n)$-Dyck path and let $(x,y)$ be a cell above path $\pi$.  Suppose $a$ is a rank in the same row as $(x,y)$ and $b$ is a rank in the same column as $(x,y)$.  If
\[ b<a, \text{ with $a$ below the path and $b$ above the path} \]
or
\[ a<b, \text{ with $b$ below the path and $a$ above the path} \]
then $(x,y)\not\in\Dinv(\pi)$.
\end{cor}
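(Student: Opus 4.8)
The plan is to reduce Corollary~\ref{fasterdinv} to the Fast $\dinv$ criterion of Theorem~\ref{fastdinvthm} by showing that the hypothesized pair of ranks $a,b$ forces one of the two inequalities in~\eqref{fastdinv} to fail. Fix a cell $(x,y)$ above $\pi$ and recall the notation $(x,y)^\rightarrow,(x,y)^\Rightarrow$ for the easternmost above-path cell and the westernmost below-path cell in row $y$, and $(x,y)^\downarrow,(x,y)^\Downarrow$ for the southernmost above-path cell and northernmost below-path cell in column $x$. The key monotonicity facts are that ranks strictly increase as one moves west along a row (by $n$ per step) and strictly increase as one moves south along a column (by $m$ per step); this follows directly from the defining formula $\gamma(u,v)=mn-un-(n+1-v)m$, and is already implicit in the computations in the proof of Theorem~\ref{fastdinvthm}.

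First consider the case $b<a$ with $a$ below the path (in row $y$) and $b$ above the path (in column $x$). Since $b$ lies above the path in column $x$, it sits weakly north of $(x,y)^\downarrow$, hence $\gamma[(x,y)^\downarrow]\geq b$ by the southward-increase of ranks. Since $a$ lies below the path in row $y$, it sits weakly west of $(x,y)^\Rightarrow$ is false — rather, $(x,y)^\Rightarrow$ is the \emph{westernmost} below-path cell in row $y$, so $a$ sits weakly east of it, and westward-increase gives $\gamma[(x,y)^\Rightarrow]\geq a$. Combining, $\gamma[(x,y)^\downarrow]\le b<a\le\gamma[(x,y)^\Rightarrow]$, so $\gamma[(x,y)^\downarrow]<\gamma[(x,y)^\Rightarrow]$, which is exactly the negation of the second inequality in~\eqref{fastdinv}; hence $(x,y)\notin\Dinv(\pi)$ by Theorem~\ref{fastdinvthm}.

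The second case, $a<b$ with $b$ below the path (in column $x$) and $a$ above the path (in row $y$), is entirely symmetric with the roles of rows and columns interchanged: $a$ above the path in row $y$ gives $\gamma[(x,y)^\rightarrow]\geq a$, while $b$ below the path in column $x$ gives $\gamma[(x,y)^\Downarrow]\geq b$, so $\gamma[(x,y)^\rightarrow]\le a<b\le\gamma[(x,y)^\Downarrow]$, i.e.\ $\gamma[(x,y)^\rightarrow]<\gamma[(x,y)^\Downarrow]$, the negation of the first inequality in~\eqref{fastdinv}, and again $(x,y)\notin\Dinv(\pi)$.

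The only genuinely delicate point — and the one I would be most careful to get right — is the bookkeeping of which of the four marked cells a given rank is weakly north/south or east/west of, together with the correct direction of rank monotonicity; once those inequalities are oriented correctly the argument is a two-line chain in each case. I would present both cases compactly, perhaps stating the row/column monotonicity as a one-line observation up front (or simply citing the four displayed identities in the proof of Theorem~\ref{fastdinvthm}, which already encode exactly this), and then invoking Theorem~\ref{fastdinvthm} to conclude.
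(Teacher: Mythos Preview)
Your approach is exactly the paper's: bound $a$ and $b$ by the extremal ranks $\gamma[(x,y)^\rightarrow],\gamma[(x,y)^\Rightarrow],\gamma[(x,y)^\downarrow],\gamma[(x,y)^\Downarrow]$ and then invoke Theorem~\ref{fastdinvthm}. The final displayed chains
\[
\gamma[(x,y)^\downarrow]\le b<a\le\gamma[(x,y)^\Rightarrow]
\qquad\text{and}\qquad
\gamma[(x,y)^\rightarrow]\le a<b\le\gamma[(x,y)^\Downarrow]
\]
match the paper's proof verbatim.

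However, the bookkeeping you yourself flagged as delicate is in fact wrong in several places. From $\gamma(u,v)=mn-un-(n+1-v)m$ one has $\gamma(u,v+1)-\gamma(u,v)=m$, so ranks increase going \emph{north}, not south as you state. This propagates: in Case~1 you derive $\gamma[(x,y)^\downarrow]\ge b$ (which is what your wrong monotonicity would give) but then silently flip it to $\gamma[(x,y)^\downarrow]\le b$ in the chain; in Case~2 you write $\gamma[(x,y)^\rightarrow]\ge a$ and then use $\gamma[(x,y)^\rightarrow]\le a$ in the chain. The chains are correct; the sentences justifying them are not. Once you fix the column monotonicity to ``north-increase'' and redo the two affected inequalities (the cell containing $b$ is weakly north of $(x,y)^\downarrow$, hence $b\ge\gamma[(x,y)^\downarrow]$; the cell containing $a$ is weakly west of $(x,y)^\rightarrow$, hence $a\ge\gamma[(x,y)^\rightarrow]$), the proof is complete and identical to the paper's.
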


\begin{proof} Suppose $a$ is a rank in the same row as $(x,y)$ and $b$ is a rank in the same column as $(x,y)$. 

First assume that $b<a$ with $a$ below the path and $b$ above the path.  So $a\leq\gamma[(x,y)^\Rightarrow]$ and $b\geq\gamma[(x,y)^\downarrow]$.  Therefore,
\[ \gamma[(x,y)^\downarrow]\leq b<a \leq\gamma[(x,y)^\Rightarrow] \]
and the result follows from Theorem \ref{fastdinvthm}.

Similarly, if we assume that $a<b$ with $b$ below the path and $a$ above the path, then $b\leq\gamma[(x,y)^\Downarrow]$ and $a\geq\gamma[(x,y)^\rightarrow]$.  Therefore,
\[ \gamma[(x,y)^\rightarrow]\leq a<b \leq\gamma[(x,y)^\Downarrow] \]
and the result follows from Theorem \ref{fastdinvthm}.
\end{proof}

\begin{cor} \label{rightcol} Suppose that $\pi$ is an $(m,n)$-Dyck path.  If $(m-1,v)$ is a cell that is above the path then $(m-1,v)\in\Dinv(\pi)$.
\end{cor}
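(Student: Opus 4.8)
The plan is to apply the Fast $\dinv$ criterion of Theorem~\ref{fastdinvthm} to the cell $(m-1,v)$, exploiting the fact that column $m$ carries no cell above the path. (If $m=1$ there is no column $m-1$ and the statement is vacuous, so assume $m\geq 2$.) First I would record that the path's final east step must be $(m-1,n)\to(m,n)$: staying weakly above $y=\frac nm x$ forces any visited lattice point $(m,y)$ to have $y\geq n$. Hence no cell $(m,v)$ lies above the path, and so for a cell $(m-1,v)$ above the path we have $(m-1,v)^\rightarrow=(m-1,v)$ and $(m-1,v)^\Rightarrow=(m,v)$; equivalently $\arm(m-1,v)=0$.

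Next I would bound the column. Since the path never moves west, it enters column $m-1$ through a single east step $(m-2,a)\to(m-1,a)$ and then runs straight up to $(m-1,n)$ before stepping east to $(m,n)$; thus the cells of column $m-1$ lying above the path are exactly $(m-1,a+1),\ldots,(m-1,n)$. As $(m-1,a)$ lies on the path, the diagonal condition gives $a\geq\frac nm(m-1)=n-\frac nm$, so (using $\gcd(m,n)=1$) $n-a\leq\lfloor n/m\rfloor$. Consequently $\leg(m-1,v)=v-a-1\leq n-a-1<\frac nm$ for every such cell $(m-1,v)$.

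It then only remains to feed $\arm(m-1,v)=0$ and $\leg(m-1,v)<\frac nm$ into the defining inequality \eqref{dinv}: the left inequality $\frac{0}{\leg+1}<\frac mn$ is automatic and the right inequality $\frac mn<\frac1{\leg}$ is exactly $\leg<\frac nm$, so $(m-1,v)\in\Dinv(\pi)$. (One could just as well check \eqref{fastdinv} directly using $\gamma(u,v)=mn-un-(n+1-v)m$, comparing $\gamma(m-1,v)$ with $\gamma(m-1,a)$ and $\gamma(m-1,a+1)$ with $\gamma(m,v)$; this gives differences $m(v-a)>0$ and $n-m(v-a-1)>0$, the latter again because $v-a-1<n/m$.) The one place needing care is the middle step — correctly locating the entry height $a$ of column $m-1$ and extracting the bound $n-a\leq\lfloor n/m\rfloor$ from ``weakly above the diagonal''; everything else is substitution.
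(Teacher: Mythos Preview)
Your argument is correct, but the paper's proof takes a shorter and rather different route. Instead of locating the entry height $a$ into column $m-1$ and bounding $\leg(m-1,v)<n/m$, the paper simply observes that every rank in column $m$ equals $-km<0$ for some $k\geq 1$, while every cell above the path carries a positive rank. From this sign information alone it reads off both fast-$\dinv$ inequalities at once: $\gamma[(m-1,v)^\downarrow]>0>\gamma(m,v)=\gamma[(m-1,v)^\Rightarrow]$, and $\gamma[(m-1,v)^\rightarrow]=\gamma(m-1,v)\geq\gamma[(m-1,v)^\downarrow]>\gamma[(m-1,v)^\Downarrow]$ (the middle inequality because ranks increase northward within a column). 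Your geometric bound is more explicit and self-contained, feeding directly into the defining inequality~\eqref{dinv}; the paper's sign argument buys brevity by never needing to know where the path enters column $m-1$.
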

\begin{proof} Suppose $(m-1,v)$ is a cell above the path $\pi$. If $a$ is a rank in column $m$ then $a=mn-mn-km=-km$, so $a<0$.  Therefore
\[ \gamma[(m-1,v)^\downarrow] >0>\gamma(m,v)=\gamma[(m-1,v)^\Rightarrow]\;. \]
Also $(m-1,v)^\rightarrow=(m-1,v)$, so
\[ \gamma[(m-1,v)^\rightarrow]=\gamma(m-1,v)\geq \gamma[(m-1,v)^\downarrow]>\gamma[(m-1,v)^\Downarrow]. \]
So $(m-1,v)\in\Dinv(\pi).$
\end{proof}

\section{Rank Words}

Given a pair of positive coprime integers, we introduce a new set of combinatorial objects, \emph{rank words}.  We show that rank words are in correspondence with rational Dyck paths. So they are enumerated by the rational Catalan numbers.  Then we introduce a triple of statistics $\area,\dinv,\skips$ on rank words and construct a generating function.

Suppose that $m,n$ are positive coprime integers.  The \emph{$(m,n)$-word} is the word consisting of all positive integers that can be written as
\begin{equation} \label{mnword}   mn-km-\ell n,  \end{equation}
where $k,\ell\geq 1$.  We write the $(m,n)$-word in increaing order and we decorate each number $a$ with subscript $\ell$ from Equation \eqref{mnword}.  When we write $a_\ell$ in an $(m,n)$-word we refer to $a$ as the \emph{rank} and $a_\ell$ as the \emph{letter} with \emph{color} $\ell$.

\begin{example}
Suppose $m=3,n=5$.  We can compute the ranks in the word:
\[ 7 = 3\cdot 5 - {\color{blue} 1}\cdot 3-{\color{red} 1}\cdot5  \]
\[ 4 = 3\cdot 5 - {\color{blue} 2}\cdot 3-{\color{red} 1}\cdot 5 \]
\[ 2 = 3\cdot 5 - {\color{blue} 1}\cdot 3-{\color{red} 2}\cdot 5 \]
\[ 1 = 3\cdot 5 - {\color{blue} 3}\cdot 3-{\color{red} 1}\cdot 5 \]

So the $(3,5)$-word is 
\[ \rankword{3}{5}{0,5,10}{4cm}. \]
\end{example}

An \emph{$(m,n)$-rank word} is an $(m,n)$-word with a subset of the letters highlighted.  The highlighting has to follow the rule:  if $a_\ell = mn-km-\ell n$ is highlighted then for all $i=1,\ldots,k$ and $j=1,\ldots,\ell$
\[ mn-im-jn \]
is highlighted.  Equivalently, if a rank $a$ is highlighted and $b>a$, with
\[ b\equiv a\pmod n \]
or
\[ b\equiv a\pmod m \]
then $b$ is also highlighted.

\begin{example} The $(3,5)$-rank words are
\[ \rankword{3}{5}{0,5,10}{4cm} \qquad\qquad \rankword{3}{5}{0,5,7}{4cm}  \]
\[ \rankword{3}{5}{0,5,4}{4cm} \qquad\qquad \rankword{3}{5}{0,5,1}{4cm}  \]
\[ \rankword{3}{5}{0,2,7}{4cm} \qquad\qquad \rankword{3}{5}{0,2,4}{4cm}  \]
\[ \rankword{3}{5}{0,2,1}{4cm}\]
\end{example}

\begin{remark} If $a_\ell$ and $b_\ell$ are letters with the same color then $a\equiv b\pmod m$.  This is because $\exists k_1,k_2$ such that
\[ a=mn-k_1m-\ell n \qquad\text{and}\qquad b=mn-k_2m-\ell n \]
so
\[ a-b=(k_1-k_2)m. \]
Therefore if $a<b$ and $a_\ell$ is highlighted then so is $b_\ell$.
\end{remark}

Notice that in the previous example the highlighted word
\[ \rankword{3}{5}{0,2,10}{4cm} \]
is missing.  Because $2\equiv 7\pmod 5$, if $2_2$ is highlighted then $7_1$ \emph{must} be highlighted.

\subsection{Correspondence of $(m,n)$-Rank Words to $(m,n)$-Dyck Paths}

Let $m,n$ be positive coprime integers and $w$ be the $(m,n)$-word.  Construct the \emph{$(m,n)$-diagram} $D$ and note that the set of positive ranks appearing in $D$ is precisely the set of ranks in $w$.  Furthermore, the color $\ell$ of a letter $a_\ell$ in $w$ is the column in which the rank $a$ appears in $D$.

\begin{example}
Let $m=4,n=7$.  The $(4,7)$-word is
\[ \rankword{4}{7}{0,7,14,21}{8cm} \]
and the $(4,7)$-diagram is
\[ \rankdiagram{4}{7}{3cm} \;. \]
We have drawn the diagonal to show that all of the positive ranks are in cells that lie strictly above the diagonal.
\end{example}

\begin{prop} \label{wordsanddiagrams} Let $m,n$ be positive coprime integers.  The following are equivalent:
\begin{enumerate} 
  \item Cell $(u,v)$ lies strictly above the diagonal $y=\frac{n}{m}x$ in the $(m,n)$-diagram.
  \item A rank $a>0$ appears in cell $(u,v)$ of the $(m,n)$-diagram.
  \item A letter $a_u$ appears in the $(m,n)$-word.
\end{enumerate}
\end{prop}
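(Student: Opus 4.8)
The plan is to prove the chain of equivalences $(1)\iff(2)\iff(3)$ by unwinding the defining formula for the rank of a cell and the defining formula for a letter in the $(m,n)$-word, both of which are essentially the same linear expression. The key observation is that the rank formula $\gamma(u,v)=mn-un-(n+1-v)m$ becomes, upon setting $\ell=n+1-v$, exactly the expression $mn-\ell m - u n$ appearing in \eqref{mnword} (after matching up which symbol plays the role of $k$ and which plays the role of $\ell$); so the content of the proposition is really just a bookkeeping translation between the geometric condition ``above the diagonal'' and the arithmetic condition ``$k,\ell\geq 1$ with the expression positive.''

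First I would establish $(2)\iff(3)$. Given a cell $(u,v)$ with $1\le u\le n$ and $1\le v\le m$, write $\ell=n+1-v$, so that $1\le\ell\le m$, and observe $\gamma(u,v)=mn-un-\ell m$. By definition the $(m,n)$-word consists of exactly the positive integers expressible as $mn-km-\ell n$ with $k,\ell\ge1$; comparing, a positive rank $a$ sitting in cell $(u,v)$ corresponds precisely to the letter $a_u$ (the color records the multiplier of $n$, which after the substitution is $u$, and the column index in the diagram), with the side condition $1\le u\le n$ being automatic from the cell's position and $1\le\ell\le m$ likewise. Conversely any letter $a_u$ in the word has $a=mn-km-un>0$ with $1\le k$, and one checks $k\le m$ is forced by positivity together with coprimality (if $k\ge m+1$ then $mn-km-un\le mn-(m+1)m-un<0$ already for $u\ge1$ when... — more carefully, positivity of $mn-km-un$ with $u\ge1$ gives $km<mn-un\le mn-n=n(m-1)<nm$, hence $k<n$, and symmetrically $u<m\cdot\frac{n}{?}$; the clean statement is that positivity forces $1\le k\le n-1$ and $1\le u\le m-1$ — wait, I must match the ranges to rows/columns), so that setting $v=n+1-\ell$ (with $\ell$ the multiplier of $n$) produces a legitimate cell. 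I will state this carefully once the roles of $k,\ell,u,v$ are pinned down, but it is a direct substitution.

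Next, $(1)\iff(2)$: cell $(u,v)$ lies strictly above the line $y=\tfrac{n}{m}x$ exactly when its northeast corner $(u,v)$ satisfies $v>\tfrac{n}{m}u$, i.e. $mv-nu>0$, i.e. (again using $\ell=n+1-v$, so $v=n+1-\ell$) $m(n+1-\ell)-nu>0$, which rearranges to $mn-\ell m - un > -m + \ell m -\ell m = \dots$ — concretely to $mn - un - \ell m > m(1-\ell)+\ell m - \ell m$; cleaned up, $mv-nu>0 \iff mn - un - \ell m + m - m > 0$, and one sees $\gamma(u,v)=m(v)-nu + (\text{constant adjustments})$ so that $\gamma(u,v)>0$ is equivalent to $mv-nu\ge 1$, equivalently $mv-nu>0$ since $\gcd(m,n)=1$ forbids $mv-nu=0$ for $(u,v)\ne(0,0)$. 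The coprimality hypothesis is what makes ``strictly above'' and ``weakly above but not on'' coincide, and it is also what guarantees no rank is zero, so the equivalence is tight.

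The main obstacle is purely notational rather than mathematical: the letter $\ell$ is used in the word's definition as the multiplier of $n$, while in the diagram the column is $u$ and the rank formula writes the multiplier of $n$ as $u$ and the multiplier of $m$ as $n+1-v$. So the crux is to fix once and for all the dictionary $(k,\ell)\leftrightarrow(u, n+1-v)$ (equivalently the color of $a_u$ equals the column $u$, as already asserted in the paragraph preceding the proposition), verify that the inequalities $k\ge1$ and $\ell\ge1$ translate respectively to ``the cell lies in a valid column'' and ``the cell lies in a valid row,'' and then check that positivity of the rank is equivalent to being strictly above the diagonal. None of these steps requires more than a line of algebra; the care is entirely in not transposing $m$ and $n$ or $k$ and $\ell$.
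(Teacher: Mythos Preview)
Your overall strategy matches the paper's: both arguments amount to fixing the dictionary $k=n+1-v$, $\ell=u$ between the rank formula $\gamma(u,v)=mn-un-(n+1-v)m$ and the word expression $mn-km-\ell n$, and then checking that positivity of this common quantity is the geometric condition ``strictly above the diagonal.'' However, your execution of $(1)\iff(2)$ contains a genuine error.

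You assert that cell $(u,v)$ lies strictly above $y=\tfrac{n}{m}x$ exactly when its \emph{northeast} corner satisfies $v>\tfrac{n}{m}u$. That is the wrong corner: since the line has positive slope, the critical point of the cell is its \emph{southeast} corner $(u,v-1)$, and the correct condition is $v-1>\tfrac{n}{m}u$, i.e.\ $m(v-1)-nu>0$. A one-line expansion gives $\gamma(u,v)=mn-un-(n+1-v)m=m(v-1)-un$, so $\gamma(u,v)>0$ is literally $m(v-1)-un>0$, not $mv-nu>0$; the two differ by $m$. This is why your algebra with $mv-nu$ never closes and trails off into ellipses. Once you use the southeast corner the equivalence $(1)\iff(2)$ is immediate, exactly as in the paper.

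A secondary bookkeeping slip: you write the cell ranges as $1\le u\le n$ and $1\le v\le m$, but in the $(m,n)$-diagram the column index runs $1\le u\le m$ and the row index $1\le v\le n$; correspondingly $n+1-v$ ranges over $1,\dots,n$, not $1,\dots,m$. This matters for the direction $(3)\Rightarrow(2)$, where you must check that a letter $a_u$ with $a=mn-km-un>0$ and $k\ge1$ lands in a legitimate cell: positivity gives $km<mn-un<mn$, hence $k\le n-1$, so $v=n+1-k$ satisfies $2\le v\le n$. Your parenthetical attempt at these bounds is on the right track but gets tangled because the ranges are swapped.
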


\begin{proof} 
(1) $\Rightarrow$ (2) \hspace{1mm} Assume that cell $(u,v)$ containing rank $a$ lies strictly above the diagonal $y=\frac{n}{m}x$ in the $(m,n)$-diagram.  Recall that $(u,v)$ are the coordinates of the northeast corner of cell $(u,v)$.  In order for cell $(u,v)$ to lie strictly above the diagonal the southeast corner must lie strictly above the diagonal.  Therefore $v-1>\frac{n}{m}u$, or equivalently
\[ (v-1)m-un>0. \]
By adding and subtracting $mn$ we have
\begin{align*} 0 & < (v-1)m-un \\
  & =mn-mn+(v-1)m-un \\
  & =mn-mn-(1-v)m-un \\
  & =mn-(n+1-v)m-un \\
  & = a \;.
\end{align*}

(2) $\Rightarrow$ (3) \hspace{1mm} Suppose that $a>0$ is a rank appearing in cell $(u,v)$ of the $(m,n)$-diagram.  Then
\[ a=mn-(n+1-v)m-un. \]
Since $1\leq v\leq n$, we have $n+1-v\geq 1$.  So set $k=n+1-v$ and $\ell=u$ then we have
\[ a=mn-km-\ell n \]
where $k,l \geq 1$.  Therefore $a_\ell =a_u$ appears in the $(m,n)$-word.

(3) $\Rightarrow$ (2) $\Rightarrow$ (1) \hspace{1mm} Assume that letter $a_u$ appears in the $(m,n)$-word.  So 
\[ a=mn-km-un >0 \]
for some integer $k\geq 1$.  Set $v=n+1-k$ and since $1\leq k\leq n$ it follows that $1\leq v\leq n$.  Therefore cell $(u,v)$ lies within the $(m,n)$-diagram and contains rank $a$.  Since $a>0,$ then $mn-(n+1-v)m-un>0$.  By combining the first two terms we have that
\[ (v-1)m-un>0,\]
which is equivalent to cell $(u,v)$ lying strictly above the diagonal $y=\frac{n}{m}x$, as stated in the first part of the proof.
\end{proof}

Suppose that $w$ is a $(m,n)$-rank word and $D$ is the $(m,n)$-diagram.  For each highlighted letter $a_\ell$ in $w$ we highlight rank $a$ in the diagram $D$.  Since letters in $w$ have strictly positive rank, we are only highlighting cells with positive ranks in $D$.  We will show that for any rank word $w$ if a rank in $D$ is highlighted, so is every rank weakly northwest.

\begin{prop} \label{highlightedrank} Let $m,n$ be positive, coprime integers.  Suppose that $w$ is a $(m,n)$-rank word and $D$ is the corresponding $(m,n)$-diagram where a rank $a$ is highlighted in $D$ if and only if it is highlighted in $w$.  If the rank in cell $(u,v)$ of $D$ is highlighted and $(x,y)$ is a cell weakly northwest of $(u,v)$, then the rank in cell $(x,y)$ is also highlighted.
\end{prop}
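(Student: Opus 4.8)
The plan is to convert the geometric relation ``weakly northwest'' into the index inequalities $i\le k$, $j\le\ell$ that appear in the definition of highlighting, and then quote that rule verbatim. Concretely, I would start from the highlighted rank $a=\gamma(u,v)=mn-un-(n+1-v)m$ in cell $(u,v)$. Since $1\le v\le n$ we may set $k:=n+1-v\ge 1$ and $\ell:=u\ge 1$, so that $a=mn-km-\ell n$; because highlighted ranks are positive, $a_\ell=a_u$ is genuinely a letter of the $(m,n)$-word, and the defining rule for rank words says that $mn-im-jn$ is highlighted for every $1\le i\le k$ and $1\le j\le\ell$.

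Next I would bring in the cell $(x,y)$ with $x\le u$ and $y\ge v$. The one computation needed is the rank identity
\[ \gamma(x,y)-\gamma(u,v)=(u-x)n+(y-v)m\ge 0, \]
obtained by subtracting the two instances of $\gamma$; in particular $\gamma(x,y)\ge\gamma(u,v)=a>0$, so by Proposition~\ref{wordsanddiagrams} the rank $\gamma(x,y)$ is itself a positive letter of the $(m,n)$-word, and it makes sense to ask whether it is highlighted. Now put $i:=n+1-y$ and $j:=x$. From $v\le y\le n$ we get $1\le i\le n+1-v=k$, and from $1\le x\le u$ we get $1\le j\le u=\ell$; moreover $mn-im-jn=mn-(n+1-y)m-xn=\gamma(x,y)$. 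Hence $\gamma(x,y)$ is one of the ranks the rule forces to be highlighted, which is exactly the claim.

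I do not expect a real obstacle here; the only point that requires a little care is checking that $(x,y)$ actually carries a positive rank, so that ``highlighted'' is even meaningful for it, and this is precisely what the displayed inequality (together with Proposition~\ref{wordsanddiagrams}) supplies. If one prefers a more combinatorial phrasing, the same proof can be organized as an induction on $(u-x)+(y-v)$: moving from a cell to the one directly north of it raises the rank by $m$ and preserves the residue mod $m$, while moving one cell west raises it by $n$ and preserves the residue mod $n$, so any cell weakly northwest of $(u,v)$ is joined to $(u,v)$ by a monotone chain of cells each of which the ``$b\equiv a\pmod m$ or $b\equiv a\pmod n$'' form of the highlighting rule forces to be highlighted.
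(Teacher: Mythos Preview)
Your argument is correct and follows essentially the same route as the paper: both proofs translate ``weakly northwest'' into the inequalities $n+1-y\le n+1-v$ and $x\le u$, then invoke the defining highlighting rule for rank words directly. Your write-up is in fact a bit more explicit than the paper's (you isolate $k,\ell,i,j$ and separately verify $\gamma(x,y)>0$ via Proposition~\ref{wordsanddiagrams}), and the alternative inductive phrasing using the mod-$m$/mod-$n$ form of the rule is a nice addendum but not needed.
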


\begin{proof} Assume that $w$ is an $(m,n)$-rank word and $D$ is the $(m,n)$-diagram where each highlighted rank in $w$ is also highlighted in $D$.  Suppose that $(u,v)$ is a cell of $D$ containing a highlighted rank and $(x,y)$ is a cell weakly northwest of $(u,v)$, i.e. $x\leq u$ and $y\geq v$.  

Since $y\geq v$ it must follow that $(n+1-y)\leq (n+1-v)$.  Similarly, $x\leq u$ gives $xn\leq un$.
By assumption,
\[ \gamma(u,v)=mn-(n+1-v)m-un \]
is highlighted in $D$, and thus also highlighted in $w$.  But by the definition of $(m,n)$-rank words, since $D(u,v)$ is highlighted in $w$ and
\[ \gamma(x,y)=mn-(n+1-y)m-xn \]
with $n+1-y\leq n+1-v$ and $x\leq u$ then $\gamma(x,y)\geq \gamma(u,v)>0$ must appear in $w$ and be highlighted.  So $\gamma(x,y)$ is highlighted in $D$.
\end{proof}

Given a highlighted $(m,n)$-diagram $D$, if a cell contains a highlighted rank, then every cell directly west and directly north must also contain a highlighted rank. So the cells in $D$ with highlighted ranks form a Ferrers diagram (in English notation) with a partition shape.  To construct an $(m,n)$-Dyck path from $(0,0)$ to $(m,n)$, we draw the edges so that every highlighted rank is strictly above the path and every rank not highlighted is strictly below the path. The path is an $(m,n)$-Dyck path.

\begin{theorem} Suppose that positive integers $m,n$ are coprime.  The set of $(m,n)$-Dyck paths is in one-to-one correspondence with the set of $(m,n)$-rank words.
\end{theorem}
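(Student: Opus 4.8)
The plan is to exhibit explicit maps in both directions and check they are mutually inverse; essentially all the work has already been done in Propositions \ref{wordsanddiagrams} and \ref{highlightedrank} and in the observation that the cells above an $(m,n)$-Dyck path form a Ferrers diagram. Define $\Phi$ from $(m,n)$-rank words to $(m,n)$-Dyck paths by the construction already described: given a rank word $w$, highlight in the $(m,n)$-diagram $D$ exactly those ranks that are highlighted in $w$. By Proposition \ref{highlightedrank} the highlighted cells are closed under moving weakly northwest, so they form an English Ferrers diagram $\lambda$, and we let $\Phi(w)$ be the lattice path from $(0,0)$ to $(m,n)$ forming the southeast boundary of $\lambda$. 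I would then check that $\Phi(w)$ is genuinely an $(m,n)$-Dyck path, i.e. that it stays weakly above $y=\frac{n}{m}x$: every highlighted cell has positive rank, hence by Proposition \ref{wordsanddiagrams} lies strictly above the diagonal, while every cell on or below the diagonal has nonpositive rank and so is never highlighted; thus the boundary separating highlighted from non-highlighted cells never dips below the diagonal.

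For the reverse map $\Psi$, start with an $(m,n)$-Dyck path $\pi$. The cells above $\pi$ form a Ferrers diagram and all carry positive ranks, so we may let $\Psi(\pi)$ be the $(m,n)$-word in which a letter $a_\ell$ is highlighted precisely when the cell of $D$ containing the rank $a$ lies above $\pi$. The substantive point is that $\Psi(\pi)$ obeys the highlighting rule for rank words: if $a_\ell = mn-km-\ell n$ is highlighted, then the cell containing $a$ is $(\ell,n+1-k)$, and for $1\le i\le k$, $1\le j\le \ell$ the rank $mn-im-jn$ occupies cell $(j,n+1-i)$, which is weakly northwest of $(\ell,n+1-k)$. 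Since the cells above $\pi$ are closed under moving weakly northwest, this cell lies above $\pi$, hence (being above $\pi$, it is above the diagonal, so by Proposition \ref{wordsanddiagrams} its rank is a letter of the $(m,n)$-word) the corresponding letter is highlighted in $\Psi(\pi)$. Therefore $\Psi(\pi)$ is a valid $(m,n)$-rank word.

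It remains to check $\Psi\circ\Phi=\mathrm{id}$ and $\Phi\circ\Psi=\mathrm{id}$, and this is pure bookkeeping. By construction a cell lies above $\Phi(w)$ exactly when its rank is highlighted in $w$, and $\Psi$ reads back precisely the set of highlighted ranks, so $\Psi(\Phi(w))=w$. Conversely $\Psi(\pi)$ records the Ferrers shape of the cells above $\pi$, and $\Phi$ reconstructs the unique path whose above-region is that shape, which is $\pi$ itself; hence $\Phi(\Psi(\pi))=\pi$.

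The only step requiring genuine care, rather than bookkeeping, is the dictionary between ``shape data'' and ``path data'': one must justify that a set of cells closed under moving weakly northwest corresponds bijectively to the lattice path forming its southeast boundary, and that the cell-level condition of lying above the diagonal (supplied by positivity of ranks together with Proposition \ref{wordsanddiagrams}) is exactly equivalent to the boundary path being a legitimate $(m,n)$-Dyck path. Once this correspondence is made precise, the theorem follows directly from Propositions \ref{wordsanddiagrams} and \ref{highlightedrank} and the Ferrers-shape property of the cells above a Dyck path.
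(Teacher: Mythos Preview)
Your proposal is correct and follows essentially the same approach as the paper: both arguments invoke Propositions \ref{wordsanddiagrams} and \ref{highlightedrank} to pass between rank words and highlighted $(m,n)$-diagrams with a partition shape, and then identify such diagrams with $(m,n)$-Dyck paths via the Ferrers-shape/boundary-path correspondence. The paper's proof is a two-sentence summary of this, whereas you have spelled out the maps $\Phi$ and $\Psi$ explicitly and verified the highlighting rule and the mutual-inverse property in detail; the content is the same.
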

\begin{proof}  Propositions \ref{wordsanddiagrams} and \ref{highlightedrank} show that $(m,n)$-rank words correspond to $(m,n)$-diagrams with a subset of positive ranks highlighted in a partition pattern.  The argument above shows that these highlighted $(m,n)$-diagrams correspond to $(m,n)$-Dyck paths.
\end{proof}

If $w$ is an $(m,n)$-rank word we will denote the corresponding highlighted $(m,n)$-diagram by $D(w)$ and the corresponding $(m,n)$-Dyck path by $\Pi(w)$.

\begin{example} \label{firstdyck} Consider the $(4,7)$-rank word
\[ w=\rankword{4}{7}{0,7,6,13}{8cm} \]
The corresponding highlighted $(4,7)$-diagram is
\[ D(w)=\rankdiagram[(1,7),(2,7),(1,6),(2,6)]{4}{7}{3cm}\;. \]
This corresponds to the $(4,7)$-Dyck path:
\[ \Pi(w)=\dyckpath{4}{7}{0,0,0,0,0,2,2}{3cm}\;. \]
\end{example}

\subsection{The $\area,\dinv$ and $\skips$ on $(m,n)$-rank words}

We define statistics $\area, \dinv,$ and $\skips$ on $(m,n)$-rank words.  We show that these statistics correspond directly to their counterparts on $(m,n)$-Dyck paths. That is, if $w$ is an $(m,n)$-rank word then
\[ \area(w)=\area(\Pi(w)), \qquad \dinv(w)=\dinv(\Pi(w)), \qquad\text{and}\qquad \skips(w)=\skips(\Pi(w))\;.\]

Let $w$ be an $(m,n)$-rank word.  Define the $\area$ statistic on $w$ to be
\[ \area(w)=\{a_k\in w: a_k \text{ is not highlighted}\}\;. \]

\begin{prop}
If $w$ is an $(m,n)$-rank word 
\[ \area(w)=\area(\Pi(w))\;.\]
\end{prop}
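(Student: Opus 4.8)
The plan is to simply unwind the two definitions and invoke the correspondence already established. First, recall from Proposition~\ref{wordsanddiagrams} that the letters of the $(m,n)$-word are in bijection with the cells of the $(m,n)$-diagram carrying a positive rank: the letter $a_k$ corresponds to the unique cell (unique since $m,n$ are coprime) containing the rank $a$, and $k$ records the column of that cell. Second, recall how $\Pi(w)$ was constructed from $D(w)$: a positive rank in $D(w)$ is highlighted precisely when its cell lies strictly above $\Pi(w)$, so a positive-rank cell lies below $\Pi(w)$ if and only if it is not highlighted in $D(w)$. Third, by the defining correspondence between $w$ and $D(w)$, a letter $a_k$ is highlighted in $w$ if and only if the rank $a$ is highlighted in $D(w)$.

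Combining these three facts, I would show that the map sending a letter $a_k$ of $w$ to the cell of $D(w)$ containing $a$ restricts to a bijection from $\{a_k \in w : a_k \text{ is not highlighted}\}$ onto $\Area(\Pi(w))$. Indeed, $a_k$ is not highlighted in $w$ iff the rank $a$ is not highlighted in $D(w)$ iff the cell containing $a$ lies below $\Pi(w)$; and that cell carries the positive rank $a$, so it belongs to $\Area(\Pi(w))$. Conversely, every cell of $\Area(\Pi(w))$ contains some positive rank $a$, which by Proposition~\ref{wordsanddiagrams} is the rank of a (necessarily non-highlighted) letter $a_k$ of $w$. Taking cardinalities then yields $\area(w)=\area(\Pi(w))$.

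There is no genuine obstacle here: the statement is pure bookkeeping on top of the already-proven chain of bijections $w \leftrightarrow D(w) \leftrightarrow \Pi(w)$. The only point requiring a moment's care is to confirm that the bijection of Proposition~\ref{wordsanddiagrams} matches highlighted letters with highlighted cells and non-highlighted letters exactly with the positive-rank cells counted by $\Area$; once the color $k$ is identified with the column of the corresponding cell, this is immediate, and the same identification is precisely what will be reused for the analogous statements about $\dinv$ and $\skips$.
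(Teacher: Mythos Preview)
Your proposal is correct and follows the same approach as the paper's own proof, which is the single sentence ``The letters in $w$ that are not highlighted correspond to the cells with positive ranks that are below path $\Pi(w)$.'' You have simply made explicit the bijection that the paper leaves implicit, invoking Proposition~\ref{wordsanddiagrams} and the construction of $\Pi(w)$ from $D(w)$; there is no substantive difference in strategy.
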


\begin{proof} The letters in $w$ that are not highlighted correspond to the cells with positive ranks that are below path $\Pi(w)$.
\end{proof}

Let $S$ be the set of ordered pairs of letters $(a_k,b_\ell)$ where $b_\ell$ is not highlighted and $a_k$ is highlighted and left of $b_\ell$ in the rank word.  Partition $S$ into $S_>$ and $S_<$ by
\[ S_<=\{(a_k,b_\ell)\in S:k<\ell\} \qquad\text{and}\qquad S_>=\{(a_k,b_\ell)\in S:k>\ell\}. \]
By the restrictions on the highlighting in $w$ there will never be an ordered pair $(a_k,b_k)$ in $S$.

Define an equivalence relation on $S=S_<\cup S_>$ by
\begin{equation} \label{equivrel} (a_k,b_\ell)\sim(c_r,d_t) \qquad\text{ if } \qquad
\left\{\begin{array}{c}k=r \text{ and } b\equiv d \pmod n \\
\text{or}\\
\ell=t \text{ and } a\equiv c \pmod n\end{array} \right.\;.  \end{equation}
That is, $a$ and $c$ are in the same column and $b$ and $d$ are in the same row, or $a$ and $c$ are in the same row and $b$ and $d$ are in the same column.

\begin{prop} If $(a_k,b_\ell)\sim(c_r,d_t)$ in $S$, then both $(a_k,b_\ell),(c_r,d_t)$ are in $S_<$ or they are both in $S_>$.  
\end{prop}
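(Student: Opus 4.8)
The plan is to carry the whole argument over to the diagram $D(w)$ and the path $\Pi(w)$ via the correspondence already set up. Recall that a letter $a_k$ of $w$ is the cell of column $k$ of $D(w)$ holding rank $a$; that $a_k$ is highlighted exactly when this cell lies above $\Pi(w)$, not highlighted exactly when it lies below; and that ``$a_k$ is left of $b_\ell$'' just means $a<b$. The one structural input is Proposition~\ref{highlightedrank}: the cells above $\Pi(w)$ form an English Ferrers shape anchored at the top--left corner, so that in a fixed row the cells above $\Pi(w)$ form an initial segment of the columns, while in a fixed column they form a topmost segment of the rows. It is enough to prove the statement when $(a_k,b_\ell)$ and $(c_r,d_t)$ are related by a single instance of the defining relation~\eqref{equivrel}: the partition $S=S_<\sqcup S_>$ induces an equivalence relation, and the transitive closure of a relation contained in an equivalence relation is again contained in it. So there are just two cases, one per clause of~\eqref{equivrel}.

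Take the clause $k=r$ and $b\equiv d\pmod n$. Writing $b=mn-k_1m-\ell n$ and $d=mn-k_2m-tn$ as in~\eqref{mnword}, the congruence together with $\gcd(m,n)=1$ and $1\le k_1,k_2\le n$ forces $k_1=k_2$, so $b$ and $d$ lie in one row $v=n+1-k_1$ of $D(w)$ (in columns $\ell$ and $t$), while $a$ and $c$ lie in the one column $k$. A pair of $S$ never repeats a color, so $k\ne\ell$ and $k\ne t$; if the two pairs landed in different parts of $S$ then one of $\ell,t$ would be $<k$ and the other $>k$, and, $\sim$ being symmetric, after possibly swapping the pairs we may assume $\ell<k<t$, so $(a_k,b_\ell)\in S_>$ and $(c_r,d_t)\in S_<$. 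Since $b$ lies below $\Pi(w)$ in row $v$, column $\ell<k$, the initial-segment property puts the cell of row $v$ in column $k$ below $\Pi(w)$ too; as $c$ lies above $\Pi(w)$ in column $k$, its row $v_c$ must then satisfy $v_c>v$. Finally expand $c=\gamma(k,v_c)$ and $d=\gamma(t,v)$ with $\gamma(u,v')=mn-un-(n+1-v')m$: the inequality $c<d$ rearranges to $(t-k)n<(v-v_c)m$, which is impossible since $t-k>0$ and $v-v_c<0$. Hence $\ell$ and $t$ are on the same side of $k$, and the two pairs lie in the same part.

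The clause $\ell=t$ and $a\equiv c\pmod n$ is the mirror image. Now the congruence forces $a$ and $c$ into one row while $b$ and $d$ share column $\ell$; assuming the two pairs lie in different parts we may, after a possible swap, take $k<\ell<r$. The cell $c$, above $\Pi(w)$ in column $r>\ell$, forces the cell of its own row in column $\ell$ above $\Pi(w)$ (initial segments are downward closed); hence $b$, below $\Pi(w)$ in column $\ell$, sits in a strictly lower row; and then $a<b$ rearranges, using $k<\ell$, to an inequality with a positive left side and a negative right side, a contradiction. This settles the proposition.

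I expect the only real friction to be bookkeeping: keeping straight which coordinate indexes rows and which columns, which way the Ferrers shape opens, and, above all, verifying that each of the two congruences mod $n$ in~\eqref{equivrel} genuinely pins its pair of ranks into a common row (this is precisely where coprimality of $m$ and $n$ enters). Once those are nailed down, each case closes with a one-line rearrangement of the rank formula followed by a sign check, so nothing conceptually new is needed.
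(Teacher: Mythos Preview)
Your proof is correct and follows essentially the same route as the paper: translate the pairs to cells in $D(w)$, use the Ferrers-shape property of the region above $\Pi(w)$, and obtain a contradiction. The differences are cosmetic. First, you explicitly reduce to a single generating instance of~\eqref{equivrel}, which the paper leaves implicit. Second, within each case the paper derives the contradiction one step earlier: in your Case~1 (with $\ell<k<t$), once you have located $c$ strictly above row $v$ in column $k$, the Ferrers shape already forces the cell $(t,v)$ holding $d$ to lie northwest of some below-path cell only after your extra rank computation; the paper instead arranges the roles so that the analogue of $d$ is seen to sit northwest of a highlighted cell and hence must itself be highlighted, contradicting $d_t\notin$ highlighted directly. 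Your extra rearrangement of the rank formula is correct and harmless, just slightly longer than needed.
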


\begin{proof} Suppose $(a_k,b_\ell)\sim(c_r,d_t)$ with $k<\ell$ and $r>t$. 

Case 1: Suppose $k=r$ and $b\equiv d \pmod n$, i.e., $a$ and $c$ are in the same column and $b$ and $d$ are in the same row. We know $a=mn-p_1m-kn$ and $b=mn-p_2m-\ell n$ for some $p_1$ and $p_2$.  Since $k<\ell$ and $a<b$, it must follow that $p_1>p_2$.  So in the $(m,n)$-diagram, rank $b$ must appear in a row above the row containing rank $a$.  Since $k=r>t$, rank $d$ lies in the column left of column $k$. By assumption, rank $d$ is in the same row as rank $b$. So $d$ is in the cell northwest to the cell containing $a$. Rank $a$ is above the path, therefore rank $d$ must also be above the path, i.e., highlighted. This is a contradiction.

Case 2: Suppose $\ell=t$ and $a\equiv c \pmod n$, i.e., $a$ and $c$ are in the same row and $b$ and $d$ are in the same column. Same as above, from $k<\ell$ and $a<b$, in the $(m,n)$-diagram rank $b$ must appear in a row above the row containing rank $a$. So rank $b$ is in a row above rank $c$. By assumption $\ell=t<r$, rank $b$ lies in the column left to rank $c$. So rank $b$ is northwest to rank $c$. Rank $c$ is above the path, therefore rank $b$ must also be above the path, i.e., highlighted.  This is a contradiction.
\end{proof}

Let the equivalence class of $S$ containing the ordered pair $(a_k,b_\ell)$ be denoted $[a_k,b_\ell]$.  
Define
\[ \skips(w)=\left|\rfrac{S}{\sim}\right|\;\]
to be the number of equivalence classes.

\begin{example} Consider the $(5,7)$-rank word
\[ w=\rankword{5}{7}{0,7,9,16,3}{8cm}, \]
we have
\[ S=\{\;(3_1,4_3),(3_1,6_2),(3_1,11_2),(8_1,11_2),(9_3,11_2)\;\}\;.\]

Since $11\equiv 4\pmod 7$ the equivalence relation partitions $S$ into the following equivalence classes:
\[  [3_1,4_3]=\{(3_1,4_3),(3_1,11_2),(8_1,11_2)\},\qquad[3_1,6_2]=\{(3_1,6_2)\},\qquad[9_3,11_2]=\{(9_3,11_2)\}. \]
The next theorem will show that $\skips(\Pi(w))=\skips(w)=3$.
\end{example}

\begin{theorem}
If $w$ is an $(m,n)$-rank word then
\[ \skips(w)=\skips(\Pi(w)). \]
\end{theorem}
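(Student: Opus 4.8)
The plan is to show that the equivalence classes of $S$ under $\sim$ are in bijection with the cells of $\Skips(\Pi(w))$, i.e. the cells above the path that fail the fast-$\dinv$ criterion of Theorem~\ref{fastdinvthm}. First I would set up the correspondence in the direction that is easiest: given a cell $(x,y)$ above the path $\Pi(w)$ that lies in $\Skips(\Pi(w))$, Theorem~\ref{fastdinvthm} tells us that $\gamma[(x,y)^\rightarrow]<\gamma[(x,y)^\Downarrow]$ or $\gamma[(x,y)^\downarrow]<\gamma[(x,y)^\Rightarrow]$. In the first case, $a:=\gamma[(x,y)^\rightarrow]$ is a highlighted rank and $b:=\gamma[(x,y)^\Downarrow]$ is an unhighlighted rank with $a<b$; since $(x,y)^\rightarrow$ and $(x,y)^\Downarrow$ sit in the same row, the letter $a$ appears to the left of the letter $b$ in the rank word (recall ranks in a fixed row are written in increasing order left to right, and highlighted ranks in a row form a left-justified prefix), so $(a_k,b_\ell)\in S_<$. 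The second case is symmetric and produces a pair in $S_>$ using the column through $(x,y)$. I would then argue this assignment $(x,y)\mapsto [a_k,b_\ell]$ is well defined — independent of which of the two inequalities holds — using the fact (already essentially in the proof of Corollary~\ref{fasterdinv}) that $(x,y)^\rightarrow$ and $(x,y)^\Downarrow$ depend only on the row of $(x,y)$ and the path, hence on the equivalence-class data ``same row / same column.''

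Next I would go the other way: starting from an equivalence class $[a_k,b_\ell]$ of $S$, I want to recover a cell. Given a representative $(a_k,b_\ell)\in S_<$, with $a$ highlighted, $b$ unhighlighted, $a$ left of $b$, and $k<\ell$, the rank $a$ lives in some cell $(k,v_a)$ above the path and $b$ in some cell $(\ell,v_b)$ below the path. I would show that in the row of $a$ — say row $v_a$ — there is a distinguished "skip cell": because $a$ is highlighted but, tracking along the row, some rank to its right (congruent mod the appropriate modulus to an unhighlighted rank sharing a column with something in the class) is not highlighted, one produces a cell $(x,y)$ with $(x,y)^\rightarrow$ and $(x,y)^\Downarrow$ in that row and $\gamma[(x,y)^\rightarrow]<\gamma[(x,y)^\Downarrow]$, i.e. $(x,y)\in\Skips(\Pi(w))$. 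The subtlety is pinning down which cell; I expect the right choice is the cell whose $\rightarrow$-representative is the highlighted rank congruent mod $m$ to $a$ that sits just left of the path's east step in that row, so that the construction does not depend on the representative of the class. Then I would check the two maps are mutually inverse, which reduces to the observation that the equivalence relation $\sim$ exactly collapses the ambiguity of "which rank in the row / which rank in the column."

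The main obstacle, as I see it, is the well-definedness and injectivity bookkeeping: a single skip cell $(x,y)$ can witness many pairs in $S$ (as in the worked $(5,7)$ example, where $[3_1,4_3]$ has three elements), and conversely a pair can arise from the row-inequality or the column-inequality, so I must verify that all pairs attached to one skip cell form exactly one $\sim$-class and that distinct skip cells give disjoint classes. The key lemma powering this is essentially Corollary~\ref{fasterdinv} run in reverse together with the congruence bookkeeping in the definition of $\sim$: two pairs $(a_k,b_\ell)$ and $(c_r,d_t)$ lie in the same class iff $\{a,c\}$ share a row or column and $\{b,d\}$ share the complementary one, which is precisely the statement that they are "seen" from the same cell $(x,y)$ via its $(x,y)^\rightarrow,(x,y)^\downarrow,(x,y)^\Rightarrow,(x,y)^\Downarrow$ data. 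Once that lemma is in place, counting classes on the word side equals counting skip cells on the path side, giving $\skips(w)=\skips(\Pi(w))$.
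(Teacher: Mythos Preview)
Your overall strategy coincides with the paper's: build a bijection between $S/{\sim}$ and $\Skips(\Pi(w))$ using the fast-$\dinv$ criterion and Corollary~\ref{fasterdinv}. The paper runs it in the direction $S/{\sim}\to\Skips(\pi)$: it sends $[a_k,b_\ell]$ with $k<\ell$ to the unique cell in column $k$ and in the same \emph{row} as $b$ (symmetrically for $k>\ell$), checks via Corollary~\ref{fasterdinv} that this cell lies in $\Skips(\pi)$, and then verifies injectivity and surjectivity, the latter by reading off $(x,y)^\rightarrow,(x,y)^\Downarrow$ or $(x,y)^\downarrow,(x,y)^\Rightarrow$ from a given skip cell.

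However, several of your concrete steps are wrong as written and would derail the argument. The cells $(x,y)^\rightarrow$ and $(x,y)^\Downarrow$ do \emph{not} sit in the same row: by definition $(x,y)^\rightarrow$ lies in row $y$, while $(x,y)^\Downarrow$ lies in column $x$, generically in a different row. (That $a$ lies left of $b$ in the rank word follows simply from $a<b$; the rank word is the increasing list of positive ranks and is not organized by rows of the diagram.) Consequently your $S_<$/$S_>$ bookkeeping is backwards: $a=\gamma[(x,y)^\rightarrow]$ has color $k\ge x$ while $b=\gamma[(x,y)^\Downarrow]$ has color exactly $x$, so $(a_k,b_x)\in S_>$, not $S_<$; the second case lands in $S_<$. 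Most importantly, your inverse map ``look in the row of $a$'' is not the right target. For $(a_k,b_\ell)$ with $k<\ell$ the skip cell one needs is the cell in column $k$ and the row of $b$, i.e.\ the intersection of $a$'s column with $b$'s row, and it is precisely this choice that makes the assignment constant on $\sim$-classes (the relation $\sim$ fixes the column of one coordinate and the row of the other). Until that intersection cell is identified explicitly, the well-definedness and injectivity checks you describe remain genuine gaps rather than routine bookkeeping.
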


\begin{proof}
Let $w$ be an $(m,n)$-rank word and $\pi=\Pi(w)$ be the corresponding $(m,n)$-Dyck path.  Define a map $\psi$ from the equivalence classes of $S$ to the set of letters in $w$ by \[ \psi[a_k,b_\ell]=q_p, \]
where
\[ p=k \text{ and } b\equiv q \pmod n \qquad \text{if}\qquad k<\ell \]
or
\[ p=\ell \text{ and } a\equiv q \pmod n \qquad \text{if}\qquad k>\ell. \]
That is, if $k<\ell$ then $q$ is the rank in the same column as $a$ and the same row as $b$ and if $k>\ell$ then $q$ is the rank in the same row as $a$ and the same column as $b$.  Hence $\psi[a_k,b_\ell]$ is a unique rank that is either directly above or directly to the left of $a_k$, so it is above the path.

Let $f$ be the function that associates to $q_p$ the cell in $\pi$ that contains rank $q$.  Since the cells in $\pi$ containing positive ranks correspond uniquely to the letters appearing in $w$, $f$ is a bijection.  Define 
\[ \varphi:\left|\rfrac{S}{\sim}\right|\rightarrow\Skips(\pi) \qquad\text{by}\qquad \varphi = f\circ\psi\;. \]

Since $b$ is a rank in the same row or column of $q$ and is below the path with $a<b$,  by Corollary \ref{fasterdinv}, the cell containing $q$ is in $\Skips(\pi)$.  So the image of $\varphi$ is in $\Skips(\pi)$.  Thus $\varphi$ is well-defined.

Next we will prove that $\varphi$ is injective.  Suppose that 
$[a_k,b_\ell], [c_r,d_t]$ are equivalence classes in $\rfrac{S}{\sim}$ such that
\[ \psi[a_k,b_\ell]=\psi[c_r,d_t]. \]
If $k<\ell$ and $r>t$ then
\[ k=t \qquad\text{ and }\qquad b\equiv c \pmod n. \]
Thus the ranks $a,d$ appear in the same column and the ranks $b,c$ appear in the same row.  Since $a,c$ are above the path and $b,d$ are below the path, it must follow that 
\[ a>d \qquad \text{and}\qquad c>b. \]
Therefore,
\[ a>d>c>b>a,\]
which is a contradiction.

If $k<\ell$ and $r<t$ then $k=r$ and $b,d$ are equivalent modulo $n$.  Therefore $(a_k,b_\ell)\sim(c_r,d_t)$.  The result is similar for $k>\ell$ and $r>t$.  So $\psi$ is injective and since $f$ is a bijection, $\varphi$ is injective.

Lastly, we will prove that $\varphi$ is surjective.  Suppose that $(x,y)\in\Skips(\pi)$. Set
\[ a=\pi[(x,y)^\rightarrow]\;, \qquad c=\pi[(x,y)^\downarrow], \]
\[ b=\pi[(x,y)^\Downarrow]\;, \qquad d=\pi[(x,y)^\Rightarrow]. \]
Since $(x,y)\in\Skips(\pi),$ then either $a<b$ or $c<d$.  Let $k$ be the column containing rank $a$.  Since $a$ and $c$ are above the path, $a_k$ and $c_y$ are highlighted in $w$.  Since $b$ and $d$ are below the path, $b_y$ and $d_{k+1}$  are not highlighted in $w$.
If $a<b$, since $a$ is in the same row of $(x,y)$ we have
\[ \psi[a_k,b_y]=q_y, \]
where $q=\gamma(x,y)$.  If $c<d$, since $c$ is in the same column of $(x,y)$ we have 
\[ \varphi[c_y,d_{k+1}]=q_y,\]
where $q=\gamma(x,y)$.  Hence $\psi$ is surjective, and since $f$ is bijective then $\varphi$ is a bijection.

Since $\varphi$ is a bijection it follows that $\skips(w)=\skips(\pi)$.
\end{proof}

Note that in an $(m,n)$-diagram $D$, each of the ranks in the rightmost column is negative as well as each rank in the bottom row.  If we disregard the rightmost column and bottom row, we see that there are $(m-1)(n-1)$ ranks remaining in the diagram.  If $\gamma(u,v)=a$ then 
\[ a=mn-un-(n-v+1)m.  \] 
Consider, 
\begin{align*} \gamma(m-u,n-v+2) & =mn-(m-u)n-[n-(n-v+2)+1]m \\
 & =un+(-v+1)m \\
 & =-mn+un+(n-v+1)m\\
 & = -a, \end{align*}
and $0$ does not appear in the set of ranks.  Therefore, half of the ranks are positive and half are negative.  Thus there are $\frac{(n-1)(m-1)}{2}$ positive ranks.

Define
\[ \dinv(w)=\frac{(n-1)(m-1)}{2}-\area(w)-\skips(w) \]
and the fact 
\[ \dinv(w)=\dinv(\Pi(w)) \]
follows from the fact that there are $\frac{(n-1)(m-1)}{2}$ positive ranks.

\subsection{Generating Functions of $(m,n)$-Rank Words}

Let $m,n$ be positive coprime integers.  Define the generating function
\begin{equation} W_{m,n}(b,q,t)=\sum_w b^{\skips(w)}q^{\dinv(w)}t^{\area(w)} \end{equation}
where the sum is over all $(m,n)$-rank words $w$.  Since for fixed $m,n$ the number of positive ranks is $\frac{(n-1)(m-1)}{2}$, the generating function is homogeneous of degree $\frac{(n-1)(m-1)}{2}$.

\begin{theorem} \label{wordtocatalan} For positive coprime integers $m,n$,
\[ W_{m,n}(1,q,t)=C_{m,n}(q,t) \]
\end{theorem}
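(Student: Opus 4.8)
The plan is to notice that Theorem~\ref{wordtocatalan} is essentially a corollary of the machinery already built up, and to assemble it from three facts proved earlier in this section. Setting $b=1$ in the definition simply erases the $\skips$ bookkeeping, so $W_{m,n}(1,q,t)=\sum_w q^{\dinv(w)}t^{\area(w)}$, the sum running over all $(m,n)$-rank words $w$. I would then compare this term-by-term with the defining sum $C_{m,n}(q,t)=\sum_\pi q^{\dinv(\pi)}t^{\area(\pi)}$, the sum running over all $(m,n)$-Dyck paths $\pi$. The comparison is carried by the correspondence $w\mapsto\Pi(w)$: by the one-to-one correspondence theorem, $w\mapsto\Pi(w)$ is a bijection between $(m,n)$-rank words and $(m,n)$-Dyck paths, so the two index sets are matched up exactly.

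Next I would invoke the statistic-matching results established for this bijection. First, $\area(w)=\area(\Pi(w))$ by the area proposition. Second, $\skips(w)=\skips(\Pi(w))$ by the skips theorem. Third, $\dinv(w)=\dinv(\Pi(w))$: on the rank-word side this is by definition $\dinv(w)=\tfrac{(n-1)(m-1)}{2}-\area(w)-\skips(w)$, while on the Dyck-path side the positive ranks are genuinely partitioned into $\Area(\pi)$, $\Dinv(\pi)$ and $\Skips(\pi)$, so $\dinv(\pi)=\tfrac{(n-1)(m-1)}{2}-\area(\pi)-\skips(\pi)$ as well, using that there are exactly $\tfrac{(n-1)(m-1)}{2}$ positive ranks; the three equalities then force $\dinv(w)=\dinv(\Pi(w))$. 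Here one should pause just long enough to confirm that the $\dinv$ counted by $|\Dinv(\pi)|$ (the arm/leg inequality of~\eqref{dinv}, as repackaged by the fast-$\dinv$ theorem) is literally the $\dinv$ appearing in the definition of $C_{m,n}(q,t)$, and that $\area(\pi)=|\Area(\pi)|$ counts precisely the lattice cells lying between $\pi$ and the diagonal; both are immediate from Proposition~\ref{wordsanddiagrams} (a rank is positive if and only if its cell lies strictly above the diagonal) together with the standard definitions, so this is a one-sentence remark rather than a real step.

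Putting these together yields the chain
\[ W_{m,n}(1,q,t)=\sum_w q^{\dinv(w)}t^{\area(w)}=\sum_w q^{\dinv(\Pi(w))}t^{\area(\Pi(w))}=\sum_\pi q^{\dinv(\pi)}t^{\area(\pi)}=C_{m,n}(q,t), \]
where the second equality uses the statistic-matching facts and the third re-indexes the sum along the bijection $w\mapsto\Pi(w)$. I do not expect any genuine obstacle: all the substantive work — the bijection between rank words and Dyck paths, and the equalities $\area(w)=\area(\Pi(w))$, $\skips(w)=\skips(\Pi(w))$, $\dinv(w)=\dinv(\Pi(w))$ — has already been completed, so the only thing to be careful about is the bookkeeping identification of the present combinatorial $\dinv$ and $\area$ with the ones in the standard definition of $C_{m,n}(q,t)$, which the earlier propositions make routine.
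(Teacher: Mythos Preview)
Your proof is correct and follows essentially the same approach as the paper: set $b=1$, invoke the bijection $w\mapsto\Pi(w)$ together with the already-established equalities $\area(w)=\area(\Pi(w))$ and $\dinv(w)=\dinv(\Pi(w))$, and reindex the sum. Your version is simply more explicit about why $\dinv$ matches (via the common formula $\tfrac{(n-1)(m-1)}{2}-\area-\skips$) and about identifying the combinatorial statistics with those in the definition of $C_{m,n}(q,t)$, but the argument is the same.
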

\begin{proof}
We showed that there is a one-to-one correspondence between $(m,n)$-rank words $w$ and $(m,n)$-Dyck paths $\Pi(w)$.  Furthermore, since $\area(w)=\area(\Pi(w))$ and $\dinv(w)=\dinv(\Pi(w))$,
\begin{align} W_{m,n}(1,q,t) &= \sum_w q^{\dinv(w)}t^{\area(w)} \\
 & = \sum_{\Pi(w)}q^{\dinv(\Pi(w))}t^{\area(\Pi(w))} \\
 & = C_{m,n}(q,t). \end{align}
\end{proof}

\section{The Case $m=3$}

When $m=3$ and $n$ is not divisible by 3, rank words can be used to create a correspondence between certain integer triples and $(3,n)$-Dyck paths.  Note that for any $(3,n)$-Dyck path there are no positive ranks in the third column.  Further, by Corollary \ref{rightcol}, any cell in the second column that is above a $(3,n)$-Dyck path $\pi$ lies in $\Dinv(\pi)$.

\begin{prop} \label{classifyprop}
Let $\pi$ be a $(3,n)$-Dyck path.  Any cell $(1,v)$ in the first column and above $\pi$ that is in $\Skips(\pi)$ must satisfy one of the following:

\begin{equation} \label{cond1} \arm(1,v)=1 \text{ \qquad and\qquad } \leg(1,v)<\frac{n}{3}-1,\end{equation}
or
\begin{equation} \label{cond2} \arm(1,v)=0 \text{\qquad and \qquad}\leg(1,v)>\frac{n}{3}. \end{equation}

\end{prop}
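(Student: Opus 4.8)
The plan is to convert membership in $\Skips(\pi)$ into an explicit inequality on $\arm(1,v)$ and $\leg(1,v)$ through the defining condition~\eqref{dinv}, and then to pin down the few admissible values of $\arm(1,v)$. First I would observe that $\arm(1,v)\in\{0,1\}$ for every cell $(1,v)$ above $\pi$: the region of cells above $\pi$ is contained in the first two columns, since the third column of a $(3,n)$-diagram contains no positive ranks ($\gamma(3,w)=-3(n+1-w)\le -3<0$, as recorded before the proposition) and cells above the path have positive ranks. Hence the only cell strictly east of $(1,v)$ that can be above $\pi$ is $(2,v)$.

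Next I would unwind~\eqref{dinv} with $m=3$: the cell $(1,v)$ lies in $\Dinv(\pi)$ exactly when $\frac{\arm(1,v)}{\leg(1,v)+1}<\frac{3}{n}<\frac{\arm(1,v)+1}{\leg(1,v)}$, which after clearing denominators becomes
\[ \tfrac{n}{3}\,\arm(1,v)-1<\leg(1,v)<\tfrac{n}{3}\bigl(\arm(1,v)+1\bigr). \]
If $\arm(1,v)=0$ this reads $\leg(1,v)<\frac{n}{3}$, so $(1,v)\in\Skips(\pi)$ iff $\leg(1,v)\ge\frac{n}{3}$; since $3\nmid n$ this is the same as $\leg(1,v)>\frac{n}{3}$, which is exactly~\eqref{cond2}. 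If $\arm(1,v)=1$ the condition becomes $\frac{n}{3}-1<\leg(1,v)<\frac{2n}{3}$, so $(1,v)\in\Skips(\pi)$ iff $\leg(1,v)\le\frac{n}{3}-1$ or $\leg(1,v)\ge\frac{2n}{3}$; again using $3\nmid n$, the first alternative is $\leg(1,v)<\frac{n}{3}-1$, matching~\eqref{cond1}, and it remains only to rule out the second alternative.

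The one step requiring a genuine argument is showing $\leg(1,v)<\frac{2n}{3}$ for every first-column cell $(1,v)$ above $\pi$. Here I would use rank positivity once more: each cell $(1,w)$ counted by $\leg(1,v)$ lies above $\pi$, hence has positive rank $\gamma(1,w)=3w-n-3>0$, forcing $w>\frac{n}{3}+1$; since also $w\le n$ and $\frac{n}{3}\notin\Z$, there are at most $n-\lceil n/3\rceil<\frac{2n}{3}$ such integers $w$, so $\leg(1,v)<\frac{2n}{3}$. This eliminates the $\leg(1,v)\ge\frac{2n}{3}$ branch in the $\arm(1,v)=1$ case, and combining the two cases gives precisely the dichotomy~\eqref{cond1}--\eqref{cond2}. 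Discarding that branch is really the crux of the argument; everything else is routine manipulation of~\eqref{dinv} together with the hypothesis that $n$ is not divisible by $3$.
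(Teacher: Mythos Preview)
Your argument is correct. The route differs mildly from the paper's: the paper works through the Fast~$\dinv$ criterion (Theorem~\ref{fastdinvthm}) rather than directly with the defining inequality~\eqref{dinv}. In the $\arm(1,v)=1$ case the paper observes that $(1,v)^\Rightarrow=(3,v)$ has negative rank while $(1,v)^\downarrow$ has positive rank, so the inequality $\gamma[(1,v)^\downarrow]>\gamma[(1,v)^\Rightarrow]$ holds automatically; hence membership in $\Skips(\pi)$ forces the \emph{other} Fast~$\dinv$ inequality to fail, which unwinds to $\leg(1,v)<\frac{n}{3}-1$. In the $\arm(1,v)=0$ case the inequality $\gamma[(1,v)^\rightarrow]>\gamma[(1,v)^\Downarrow]$ is automatic for the trivial reason $(1,v)^\rightarrow=(1,v)$, and again the remaining failed inequality gives $\leg(1,v)>\frac{n}{3}$.

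The practical difference is that the paper's rank formulation absorbs your ``crux'' step: the automatic inequality $\gamma[(1,v)^\downarrow]>\gamma[(1,v)^\Rightarrow]$ is, via the proof of Theorem~\ref{fastdinvthm}, exactly the statement $\leg(1,v)<\frac{2n}{3}$ that you establish separately by counting first-column cells with positive rank. So your extra paragraph and the paper's one-line rank observation are doing the same work. Your approach has the virtue of being self-contained from~\eqref{dinv} without invoking Theorem~\ref{fastdinvthm}; the paper's approach is shorter once that theorem is in hand.
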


\begin{proof}
Let $(1,v)$ be a cell in the first column above the path $\pi$ that is not in $\Dinv(\pi)$. 

Suppose $\arm(1,v)=1$.  Then 
\[ \gamma[(1,v)^\Rightarrow]<0<\gamma[(1,v)^\downarrow]. \]
So for $(1,v)\in\Skips(\pi)$ it must be that
\[ \gamma(1,v)-3\cdot (\leg(1,v)+1)=\gamma[(1,v)^\Downarrow]>\gamma[(1,v)^\rightarrow]=\gamma(1,v)-n, \]
or
\[ 3\cdot(\leg(1,v)+1)<n. \]
By solving for $\leg(1,v)$ we get the desired inequality.

Suppose that $\arm(1,v)=0$.  Then
\[ \gamma[(1,v)^\Downarrow]<\gamma(1,v)=\gamma[(1,v)^\rightarrow]. \]
So for $(1,v)\in\Skips(\pi)$ it must be that
\[ \gamma(1,v)-n=\gamma[(1,v)^\Rightarrow]>\gamma[(1,v)^\downarrow]=\gamma(1,v)-3\cdot\leg(1,v), \]
or
\[ n<3\cdot\leg(1,v). \]
By solving for $\leg(1,v)$ we get the desired inequality.
\end{proof}

These propositions tell us that the only cells above a path that are in $\Skips$ are those without a cell immediately to the right above the path with many cells below, or those with a cell immediately to the right above the path with few cells below.  Thus we can have one case or the other, but not both.  This leads us to an important specialization of $\skips$ when $m=3$. 

\begin{theorem}
For a $(3,n)$-rank word $w$, $\skips(w)$ is equal to the number of unhighlighted letters with a highlighted letter immediately to the left.
\end{theorem}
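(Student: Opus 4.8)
Here is a plan for proving the statement.

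The plan is to work on the Dyck path $\pi=\Pi(w)$: since $\skips(w)=\skips(\Pi(w))$ by the theorem proved above, it suffices to show that $\skips(\pi)$ equals the number of unhighlighted letters of $w$ having a highlighted letter immediately to their left. Put $N=\lfloor n/3\rfloor$. Sorting the $(3,n)$-word by colour, its colour-$1$ letters are $\alpha_1>\alpha_2>\cdots$ with $\alpha_k=2n-3k$ and its colour-$2$ letters are $\beta_1>\beta_2>\cdots$ with $\beta_k=n-3k$, while colour $3$ contributes no positive rank. By the Remark and the highlighting rule the highlighted letters are precisely $\alpha_1,\dots,\alpha_p$ and $\beta_1,\dots,\beta_q$ for some integers $0\le q\le p$. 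Since $\gamma(1,n+1-k)=\alpha_k$ and $\gamma(2,n+1-k)=\beta_k$, the cells above $\pi$ are the $(1,n+1-k)$ with $1\le k\le p$ together with the $(2,n+1-k)$ with $1\le k\le q$; thus a cell $(1,n+1-k)$ above $\pi$ has $\arm(1,n+1-k)=1$ iff $k\le q$, and $\leg(1,n+1-k)=p-k$.

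The one computational ingredient is the linear order of $w$. A direct comparison of the ranks above shows that, listed by increasing rank, the successor of $\beta_k$ is $\alpha_{N+k}$, the successor of $\alpha_{N+k}$ is $\beta_{k-1}$ for $k\ge 2$, the successor of $\alpha_{N+1}$ is $\alpha_N$, and the successor of $\alpha_k$ is $\alpha_{k-1}$ for $2\le k\le N$; only the identity of the smallest letter depends on $n\bmod 3$, and it is never needed. Two consequences will be used: the successor of a colour-$2$ letter is always a colour-$1$ letter, and a highlighted colour-$1$ letter whose successor is also colour $1$ has that successor highlighted as well, since it carries a strictly smaller index.

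For the main step, Corollary~\ref{rightcol} and the absence of positive ranks in column $3$ put $\Skips(\pi)$ inside column $1$, so it consists of cells $(1,n+1-k)$, $1\le k\le p$. Applying Theorem~\ref{fastdinvthm} to such a cell (its relevant neighbour-ranks being $\alpha_p,\alpha_{p+1}$ in its column and, in row $n+1-k$, $\beta_k,-3k$ or $\alpha_k,\beta_k$ according as $k\le q$ or $k>q$), and using that $p-k\in\Z$ but $n/3\notin\Z$, one gets $(1,n+1-k)\in\Skips(\pi)$ exactly when either $k\le q$ and $p-k\le N-1$, or $k>q$ and $p-k\ge N+1$. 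But $\alpha_{N+k}=\mathrm{succ}(\beta_k)$ is unhighlighted iff $N+k>p$ iff $p-k\le N-1$, and $\alpha_{N+k+1}=\mathrm{pred}(\beta_k)$ is highlighted iff $N+k+1\le p$ iff $p-k\ge N+1$. Hence $\Skips(\pi)$ is the disjoint union of (i) the cells $(1,n+1-k)$ with $\beta_k$ highlighted but $\mathrm{succ}(\beta_k)$ unhighlighted, and (ii) the cells $(1,n+1-k)$ with $\beta_k$ unhighlighted but $\mathrm{pred}(\beta_k)$ highlighted.

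Finally I would exhibit a bijection from $\Skips(\pi)$ onto the unhighlighted letters of $w$ with a highlighted left neighbour, sending a cell of type (i) to the colour-$1$ letter $\mathrm{succ}(\beta_k)$ and a cell of type (ii) to the colour-$2$ letter $\beta_k$; in both cases the target is unhighlighted with highlighted left neighbour ($\beta_k$, resp.\ $\mathrm{pred}(\beta_k)$), the two types land in disjoint colour classes, and $k$ is recoverable, so the map is injective. For surjectivity, take an unhighlighted $y$ with highlighted left neighbour $x$. If $y$ has colour $2$, then $y=\beta_k$ with $k>q$ and $x=\mathrm{pred}(\beta_k)=\alpha_{N+k+1}$, highlighted, so $y$ comes from the type-(ii) cell $(1,n+1-k)$. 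If $y$ has colour $1$, then $y=\alpha_i$ with $i>p$, and by the previous paragraph $x$ cannot be colour $1$ (a colour-$1$ letter with successor $\alpha_i$ has index $>i>p$, hence is unhighlighted), so $x=\beta_{i-N}=\mathrm{pred}(\alpha_i)$ is highlighted, whence $i-N\le q$ and $y$ comes from the type-(i) cell $(1,n+1-(i-N))$. Therefore $\skips(\pi)=|\Skips(\pi)|$ is the required number and $\skips(w)=\skips(\pi)$ closes the proof. I expect the only real work to be the routine rank bookkeeping behind the successor list and checking that the $\Skips$ inequalities align with the two families of adjacencies.
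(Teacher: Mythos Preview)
Your argument is correct, but it takes a genuinely different route from the paper.  The paper works directly with the equivalence-class definition of $\skips(w)$: since the only colours present are $1$ and $2$, any pair $(c_r,d_t)\in S$ can be slid (via $\sim$) to the unique pair $(a_k,b_\ell)$ with $a_k$ the rightmost highlighted letter left of $d_t$ and $b_\ell$ the leftmost unhighlighted letter right of $a_k$; conversely two adjacent pairs in the same class are forced to coincide because equal colours plus a congruence mod $n$ pins down the letter.  This is short and uses nothing beyond the two-colour constraint and the highlighting rule.

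You instead invoke the earlier theorem $\skips(w)=\skips(\Pi(w))$, set up explicit coordinates $\alpha_k=2n-3k$, $\beta_k=n-3k$, read off the successor structure of the $(3,n)$-word, apply the fast-$\dinv$ criterion cell by cell to locate $\Skips(\pi)$ exactly, and then build a bijection from $\Skips(\pi)$ to the adjacent highlighted/unhighlighted pairs.  The bookkeeping all checks (in particular your two $\Skips$ conditions $k\le q,\ p-k\le N-1$ and $k>q,\ p-k\ge N+1$ are exactly the content of Proposition~\ref{classifyprop}, and the successor identities $\mathrm{succ}(\beta_k)=\alpha_{N+k}$, $\mathrm{pred}(\beta_k)=\alpha_{N+k+1}$ are correct).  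What your approach buys is an explicit cell-level description of $\Skips(\pi)$ and a concrete dictionary between skip cells and adjacent pairs; what it costs is the dependence on $\skips(w)=\skips(\Pi(w))$ and a fair amount of index arithmetic that the paper's argument avoids entirely.
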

\begin{proof} What this theorem states is that for each equivalence class in $\left|\rfrac{S}{\sim}\right|$ there is a unique representative $(a_k,b_\ell)$ where $a_k$ appears immediately to the left of $b_\ell$.  Let $(c_r,d_t)$ be an ordered pair of letters in $w$ where $c_r$ is highlighted, $d_t$ is unhighlighted and $c_r$ appears to the left of $d_t$, but not necessarily {\it immediately} to the left.  Set $a_k$ to be the rightmost highlighted entry to the left of $d_t$ and set $b_\ell$ to be the leftmost unhighlighted entry to the right of $a_k$.  Note that it is possible that $a_k=c_r$ or $b_\ell=d_t$.

Recall that if a letter is unhighlighted then every letter with the same color lying to the left must also be unhighlighted.  Since the only possible colors are 1 and 2 it is immediate that $k=r$ and $\ell=t$.  Therefore,
\begin{align*} (c_k,d_\ell) & \sim (c_k,b_\ell) \\
 & \sim (a_k,b_\ell)\;. \end{align*}
This shows that every equivalence class contains a pair of letters that are adjacent in the rank word.

Suppose that $(a_k,b_\ell)\sim (c_r,d_t)$ where $a_k$ is immediately left of $b_\ell$ and $c_r$ is immediately left of $d_t$ in the rank word.  For these ordered pairs to be equivalent, it must be that either $k=r$ or $\ell=t$.  Since the only available colors are 1 and 2 and $r\neq t, k\neq\ell$, it must follow that $k=r$ and $\ell=t$.  From the equivalence relation it must be true that either 
\[ a \equiv c \pmod n \qquad \text{or} \qquad b\equiv d \pmod n. \]

If $b\equiv d \pmod n$ then in the corresponding Dyck path the cells containing ranks $b$ and $d$ must be in the same row.  However, $\ell=t$ so the cells are also in the same column.  Therefore $b=d$.  Since $b_\ell=d_t$, and $a_k$ is immediately left of $b_\ell$ and $c_r$ is immediately left of $d_t$ it follows that $a_k=c_r$.  

If $a\equiv c \pmod n$ then the result is the same.  So $\skips(w)$ is the number of unhighlighted letters in $w$ with a highlighted letter immediately to the left.
\end{proof}

\subsection{The rank word construction algorithm}

A triple of non-negative integers $(a,d,s)$ is said to be a {\it Dyck triple} if $s\leq \min{\{a,d\}}$ and $a+d+s+1$ is not divisible by 3. From a Dyck triple, by setting $n=a+d+s+1$, we will describe how to construct a unique $(3,n)$-rank word and from this rank word we can construct a unique Dyck path, as originally presented in \cite{Kalis15}.

\vspace{3mm}
\framebox[15.8cm]{
\begin{algorithm}[H]
\SetAlgoLined
\KwData{A Dyck triple $(a,d,s)$}
\KwResult{Construct a $(3,n)$-rank word where $n=a+d+s+1$. }
Write the empty $(3,n)$-word and highlight the rightmost (largest) $d$ ranks in the rank word.\\
\For{$i=1$ to $s$}{
Let $r_c$ be the smallest unhighlighted rank with a highlighted rank immediately to the \\ 
 right.  Find the largest rank that is less than $r$ with color not equal to $c$ and highlight it. 
}
\end{algorithm}
}
\begin{center}The rank word construction algorithm\end{center}

\begin{example} Suppose that we wish to construct a $(3,8)$-rank word from integers 
\[a=3,\qquad d=2, \qquad s=2. \]  First we write the ranks and color them:
\[ \rankword{3}{8}{0,8,16}{6cm} \;. \]
We highlight the largest $d=2$ ranks:
\[ \rankword{3}{8}{0,8,10}{6cm} \;. \]
The smallest unhighlighted rank with a highlighted rank immediately to the right is $7_1$ so we highlight the next largest rank with color $2$:
\[ \rankword{3}{8}{0,5,10}{6cm} \;. \]
Since $s=2$ we repeat this process one more time with the selected rank $4_1$:
\[ \rankword{3}{8}{0,2,10}{6cm} \;. \]
\end{example}

\begin{example} The $(3,8)$-rank word constructed from the integers
\[ a=5,\qquad d=1,\qquad s=1 \]
is 
\[ \rankword{3}{8}{0,5,13}{6cm} \;. \]
\end{example}

\begin{prop} Given a Dyck triple $(a,d,s)$ set $n=a+d+s+1$.  The rank word construction algorithm will give a valid $(3,n)$-rank word.
\end{prop}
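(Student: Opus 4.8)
The approach is an induction on the number of highlighting operations the algorithm performs, with the invariant that the highlighted letters always form a legal $(3,n)$-rank word and that the next operation is well defined; the final state is then the desired valid $(3,n)$-rank word. The key simplification is that legality is geometric here. By the rank identity $\gamma(x,y)-\gamma(u,v)=(u-x)n+(y-v)m$ ranks grow weakly to the north and to the west, so, as in Proposition~\ref{highlightedrank}, a highlighting satisfies the rank-word rule if and only if the highlighted cells form a Ferrers diagram inside the positive region of the $(3,n)$-diagram. Because $3\nmid n$, that region lies in columns $1$ and $2$ only, with $\lfloor 2n/3\rfloor$ positive cells in column $1$ and $\lfloor n/3\rfloor$ in column $2$ (the latter in the higher rows), so a legal highlighting is recorded bijectively by the pair $(h_1,h_2)$ of its per-column counts, legality amounting to $0\le h_2\le\min\{h_1,\lfloor n/3\rfloor\}$ and $h_1\le\lfloor 2n/3\rfloor$. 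Two more facts will be used: ranks $1$ and $2$ (the bottom cells of the two columns) carry the two different colours, and above them the colours of the $(3,n)$-word run in a fixed alternating staircase.

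Base case: line~1 highlights the $d$ largest ranks, which is legitimate since $a+d+s+1=n$ gives $d\le n-1$, and by the monotonicity of ranks the $d$ largest positive ranks always form an NW-closed set (the largest is the corner $(1,n)$, and any cell weakly northwest of a top-$d$ cell has larger rank, hence is itself top-$d$). For the inductive step, suppose after $i-1\le s-1$ operations we have a legal $(h_1,h_2)$ with $h_1+h_2=d+i-1$. First, a letter $r_c$ of the kind the loop selects exists: a step being executed forces $s\ge1$, hence $d\ge1$, so the largest rank is highlighted and the rightmost letter of the word is highlighted; and $h_1+h_2=d+i-1\le d+s-1=n-2-a<n-1$, so some letter is unhighlighted, and scanning the word from the right yields a first unhighlighted letter with a highlighted right-neighbour, the smallest of which is $r_c$.

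It remains to show that the largest rank $r'<r$ with colour $c'\ne c$ exists, is currently unhighlighted, and can be added while preserving legality; these are the points at which the Dyck-triple hypotheses $s\le\min\{a,d\}$ are used. The only way $r'$ can fail to exist is $r=1$, and since ranks $1$ and $2$ are the bottom cells of the two columns, $r=1$ forces the column containing rank $2$ to be fully highlighted while the column containing rank $1$ is not. Tracking $(h_1,h_2)$ against the operation index and using $s\le a$ and $s\le d$, one shows the configuration stays inside the legal region $h_2\le h_1\le\lfloor 2n/3\rfloor$, $h_2\le\lfloor n/3\rfloor$ throughout $i\le s$, so in particular this saturated state is never reached while operations remain; and the same control shows that $r'$ is the topmost unhighlighted cell of its column (so unhighlighted) and that column $2$ is only ever enlarged when $h_1>h_2$. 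Hence the updated configuration is again a legal $(h_1,h_2)$ with $h_1+h_2=d+i$, which closes the induction and, after $s$ operations, produces a legal $(3,n)$-rank word.

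The main obstacle is precisely the bookkeeping in the previous paragraph: proving the algorithm never stalls, i.e. that the chosen $r$ is always at least $2$ and that the letter it highlights is a new cell legally appendable to the current Ferrers shape. I anticipate the cleanest route is to show first that each operation simply moves one cell from the unhighlighted part of column $1$ or of column $2$ into the highlighted part, according to the colour of $r$, and then to verify that $s\le a$ keeps $h_1+h_2$ strictly below $n-1$ while $s\le d$ keeps column $1$ ``ahead of'' column $2$ (and keeps $r$ away from the very bottom of the word), so that every configuration the algorithm visits lies in the legal region. Matching the loop's selection rules to these moves on $(h_1,h_2)$ through the word's fixed staircase of colours is the part that requires care.
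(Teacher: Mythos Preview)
Your framework is sound, but what you have written is a plan rather than a proof. The inductive invariant you name---that the highlighted cells form a legal Ferrers shape recorded by $(h_1,h_2)$ and that the loop's next step is well defined---is the right one, but the paragraph beginning ``It remains to show\ldots'' does not actually show it. You write ``Tracking $(h_1,h_2)$ against the operation index and using $s\le a$ and $s\le d$, one shows the configuration stays inside the legal region,'' and then in the next paragraph concede that ``the main obstacle is precisely the bookkeeping'' and that you ``anticipate the cleanest route'' without executing it. The verification that $r'$ exists, that it is the topmost unhighlighted cell of its column, and that adding it keeps $h_2\le h_1$ is the entire content of the proposition, and it is deferred.

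The paper's argument is shorter and takes a different tack: rather than an induction on operations, it observes directly that the $(3,n)$-word has its $\lceil n/3\rceil$ largest ranks all of colour~$1$, after which colours alternate, and then splits on whether the initial block of $d$ highlighted ranks already reaches past that run. In each case the total number of letters consumed after $s$ skips is bounded by $n-1$ via a one-line inequality (namely $2s+d\le s+a+d$ when $d\ge\lceil n/3\rceil$, and $2(s-1)+\lceil n/3\rceil+1\le n-1$ when $d<\lceil n/3\rceil$, the latter reducing to $s-1+\lceil n/3\rceil\le a+d$), and both follow immediately from $s\le a$. This counting argument simultaneously guarantees that $r$ is never the smallest letter and that the newly highlighted letter is available. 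Your geometric $(h_1,h_2)$ picture would recover exactly these inequalities once the bookkeeping is done, but until you carry it out the proposal is incomplete.
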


\begin{proof}  We need to show that there are enough ranks to perform $s$ skips.  A $(3,n)$-word consists of all of the positive ranks of the form $mn-un-3v$ where $u,v\geq 1$.  So the largest $\lceil n/3 \rceil$ ranks have color $1$ and then the ranks alternate colors.

If $d\geq\lceil n/3 \rceil$ then each skip corresponds to two entries, one skipped and one not skipped.  So we need $2s+d\leq s+a+d$, which is true.  If $d<\lceil n/3 \rceil$ then the first skip and the rightmost $d$ entries together encompass $\lceil n/3 \rceil+1$ of the total entries.  The remaining skips then correspond to two entries. Again, we need $2(s-1)+\lceil n/3 \rceil+1\leq s+a+d$, which is equivalent to
\begin{equation}\label{eqstar} s-1+\lceil n/3 \rceil \leq a+d.\end{equation}
Recall that area is the unhighlighted entries.  Since there were $\lceil n/3\rceil -d$ skipped entries at the first skip and $s-1$ skipped entries in the middle,  $a\geq \lceil n/3\rceil -d + s-1$, which is exactly inequality \eqref{eqstar}.
\end{proof}

\begin{cor}
If $(a_1,d_1,s_1)$ and $(a_2,d_2,s_2)$ are distinct Dyck triples with constructed rank words $w_1$ and $w_2,$ respectively, then $w_1\neq w_2.$
\end{cor}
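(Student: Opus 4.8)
The plan is to show that the Dyck triple $(a,d,s)$ can be recovered from the constructed rank word $w$, so distinct triples must produce distinct words. Concretely, I would exhibit three functionals on $(3,n)$-rank words that return $a$, $d$, and $s$ respectively when applied to the output of the algorithm.

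First I would read off $n$: since $w$ is a $(3,n)$-word, the number of letters (equivalently, the largest rank, or the length of the word) determines $n$, and then $a+d+s = n-1$ is fixed. Next I would recover $d$ and $s$ directly from the highlighting data. By the preceding theorem, $\skips(w)$ equals the number of unhighlighted letters with a highlighted letter immediately to the left, and I claim this equals $s$: each pass of the for-loop highlights exactly one new rank, and one checks that the quantity ``number of unhighlighted letters having a highlighted left-neighbor'' is exactly $1$ after the initialization step of highlighting the top $d$ ranks (assuming $d \ge 1$; the boundary case $d=0$, which forces $a \ge s$, needs a separate remark since then the word starts unhighlighted) and increases by exactly $1$ at each iteration — highlighting the chosen rank $r'$ (the largest rank below $r_c$ of the opposite color) removes the adjacency at $r_c$ but creates a new one at $r'$, for a net change that must be tracked carefully. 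Granting this, $s = \skips(w)$. Then $\area(w)$ = number of unhighlighted letters; I would argue $\area(w) = a$ by the counting already done in the proof of the previous proposition (there were $\lceil n/3\rceil - d$ unhighlighted entries ``consumed'' at the first skip when $d < \lceil n/3\rceil$, plus $s-1$ in the middle, and the algorithm's construction forces the total unhighlighted count to be exactly $a$; when $d \ge \lceil n/3 \rceil$ the bookkeeping is simpler). Finally $d = n - 1 - a - s = (a+d+s) - a - s$.

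So from $w$ alone I extract $n$ (hence $a+d+s+1$), then $s = \skips(w)$, then $a = \area(w)$, then $d$ by subtraction. Since the triple is determined by $w$, the map $(a,d,s) \mapsto w$ is injective, which is the corollary.

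The main obstacle will be pinning down the claim that ``$s = \skips(w)$'' cleanly, i.e. verifying that the number of unhighlighted-with-highlighted-left-neighbor letters starts at $1$ after initialization and goes up by exactly $1$ per loop iteration, including the edge cases $d = 0$ and $d \ge \lceil n/3\rceil$, and making sure the rank $r'$ selected at each step is genuinely unhighlighted before the step (so that the count really increases rather than staying flat) — this is where the constraint $s \le \min\{a,d\}$ and the alternating-color structure of the top of the $(3,n)$-word get used. Once that lemma is in hand the rest is immediate arithmetic.
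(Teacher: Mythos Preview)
Your plan---recover $(a,d,s)$ from $w$ via invariants, hence injectivity---is exactly the paper's approach. The paper's version is just terser: it reads off $d$ directly as the length of the rightmost contiguous highlighted block (the loop always highlights some $r' < r_c$, so the letter $r_c$ immediately left of that block is never highlighted and the block never grows), reads off $a$ as the number of unhighlighted letters (exactly $d+s$ letters get highlighted in total), and reads off $s$ as the number of unhighlighted letters with a highlighted left-neighbor. You instead get $d$ last by subtraction, which is fine but less direct.

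Your sketch of the $s=\skips(w)$ step has a slip worth fixing. After the initialization (highlight the rightmost $d$ letters), the count of unhighlighted letters with a highlighted \emph{left}-neighbor is $0$, not $1$: all highlighted letters sit at the far right, so no unhighlighted letter has one to its left. You seem to be conflating the algorithm's selection criterion (smallest unhighlighted letter with a highlighted letter immediately to the \emph{right}) with the $\skips$ criterion (unhighlighted with highlighted immediately to the \emph{left}); these are different, and highlighting $r'$ does not ``remove an adjacency at $r_c$'' in the $\skips$ sense. The clean bookkeeping is: each iteration highlights one new letter $r'$ strictly below all previously highlighted letters; the letter $r'^+$ immediately to its right is strictly below $r_{c}$ and hence still unhighlighted, so it becomes a new contributor to the $\skips$ count; and $r'$ itself had an unhighlighted left-neighbor, so nothing is lost. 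Net change $+1$ per iteration, starting from $0$, ending at $s$. With that correction your argument is complete and matches the paper's.
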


\begin{proof}
If $d_1\neq d_2$, then the rank word construction algorithm would highlight different numbers of letters on the right, so $w_1\neq w_2$.  If $s_1\neq s_2$ then the number of unhighlighted letters with highlighted letters to the immediate left would be different, so $w_1\neq w_2$.  If $a_1\neq a_2$ then the number of unhighlighted letters would be different, so  $w_1\neq w_2$.
\end{proof}

\subsection{Correspondence of $(3,n)$-rank words}

We have shown that a Dyck triple $(a,d,s)$ with $n=a+d+s+1$ corresponds to a unique $(3,n)$-rank word and thus to a unique $(3,n)$-Dyck path.  Now we will address the converse and show that every $(3,n)$-Dyck path corresponds to a triple of non-negative integers $(a,d,s)$ with $s\leq\min{\{a,d\}}$ and $a+d+s+1=n$.

\begin{prop}
If $w$ is a $(3,n)$-rank word then $\skips(w)\leq\dinv(w)$ and $\skips(w)\leq\area(w)$.
\end{prop}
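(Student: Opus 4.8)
The plan is to exploit the explicit description of $\skips$ for $m=3$ just established: $\skips(w)$ counts the unhighlighted letters that have a highlighted letter immediately to their left in the rank word $w$. Call such an unhighlighted letter a \emph{skip site}. Each skip site $b_\ell$ thus sits immediately to the right of a highlighted letter $a_k$ with $k\neq\ell$ (so $\{k,\ell\}=\{1,2\}$). The strategy is to build, for each skip site, an injection into the set of highlighted letters (proving $\skips\le\dinv$, after accounting for the rightmost column via Corollary~\ref{rightcol}) and a separate injection into the set of unhighlighted letters (proving $\skips\le\area$).

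For $\skips(w)\le\area(w)$: the skip sites are themselves unhighlighted letters, so they form a subset of the $\area(w)$ many unhighlighted letters, giving the inequality immediately — in fact with room to spare, since the smallest unhighlighted letter in a $(3,n)$-word (rank $1$ or $2$, whichever is positive) is never a skip site unless there is a highlighted letter to its left, and one can check there is always at least one unhighlighted letter that is not a skip site whenever $\skips(w)>0$. So this half is essentially immediate from the preceding theorem.

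For $\skips(w)\le\dinv(w)$: here I would use Proposition~\ref{classifyprop} together with the fast-$\dinv$ picture. Each skip site $b_\ell$ corresponds, in the diagram $D(w)$, to a cell $(x,y)$ above the path with either $\arm(x,y)=1$ and small leg, or $\arm(x,y)=0$ and large leg. In the first case the cell immediately to the right, $(x+1,y)$, is above the path and — being in the second (i.e. $(m-1)$st) column — lies in $\Dinv(\pi)$ by Corollary~\ref{rightcol}. In the second case, since $\leg(x,y)>n/3$, the column of $(x,y)$ has many cells above the path; I would point to the cell $(x+1, y')$ in the second column lying in the same row as the \emph{bottom} cell of that tall column, which again is in $\Dinv$ by Corollary~\ref{rightcol}. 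The map sending each skip site to the associated second-column $\Dinv$-cell is the candidate injection; showing distinct skip sites go to distinct $\Dinv$-cells is the crux.

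The main obstacle is exactly this injectivity: two different skip sites could a priori be routed to the same second-column $\Dinv$-cell, and one must rule that out. I expect the cleanest route is to order the skip sites by the row index $y$ of their associated cell $(x,y)$ and show the assigned $\Dinv$-cell is strictly monotone in $y$ — using that the shape above a $(3,n)$-path, restricted to the first column, is an initial segment of rows, so distinct skip sites occur in distinct rows, and the "type" (whether $\arm=0$ or $\arm=1$) can switch at most once as $y$ increases (this is precisely the "one case or the other, but not both" remark following Proposition~\ref{classifyprop}). Once monotonicity is in hand, injectivity follows and hence $\skips(w)=|\{\text{skip sites}\}|\le|\Dinv(\pi)|=\dinv(w)$. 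Alternatively — and this may be shorter — I would prove the inequality directly in rank-word language by noting that a skip site $b_\ell$ pairs with a highlighted letter of the \emph{other} color; assigning to $b_\ell$ that highlighted neighbor $a_k$ and checking two skip sites of colors forcing the same $a_k$ cannot coexist (again using that all highlighted letters of a given color form a right-justified block) yields the injection into highlighted letters, i.e. into the $\frac{(n-1)(3-1)}{2}-\area(w)-\skips(w)+\skips(w)$ cells, which must be reconciled with the identity $\dinv(w)=\frac{(n-1)(m-1)}{2}-\area(w)-\skips(w)$; I would pick whichever of the two bookkeeping schemes closes cleanly.
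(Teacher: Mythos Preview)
Your argument for $\skips(w)\le\area(w)$ is correct and matches the paper's: skip sites are themselves unhighlighted letters, so their count is bounded by $\area(w)$.

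For $\skips(w)\le\dinv(w)$ there is a genuine gap. Your diagram-based injection into second-column $\Dinv$-cells breaks precisely when column~2 is entirely below the path: then there are no second-column cells above the path to receive anything, yet $\Skips(\pi)$ can be nonempty (take $k>\tfrac{n}{3}$ cells in column~1 and none in column~2). Your fallback---assigning each skip site $b_\ell$ its highlighted left neighbour $a_k$---only yields an injection into the set of \emph{highlighted} letters, giving $\skips(w)\le n-1-\area(w)=\dinv(w)+\skips(w)$, which is vacuous; the ``reconciliation'' you allude to is exactly the missing content.

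The idea you are missing, and which the paper exploits, is that all skip sites carry the \emph{same} colour: if one skip site has colour $c\in\{1,2\}$, every highlighted letter lying to the left of an unhighlighted letter of the other colour would force a contradiction with right-justification of the highlighted blocks. This splits the problem into two short cases. If $c=2$, the highlighted left neighbours force all of the rightmost $\lceil n/3\rceil$ (colour-1) letters to be highlighted, and since skip sites are among the $\lfloor n/3\rfloor$ colour-2 letters one gets $\skips(w)\le\lfloor n/3\rfloor<\lceil n/3\rceil\le\dinv(w)$. If $c=1$, each skip site's highlighted left neighbour has colour~2; these neighbours are distinct (the colours alternate there) and correspond to second-column cells above the path, all of which lie in $\Dinv(\pi)$ by Corollary~\ref{rightcol}, giving the injection you wanted. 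Once you record the single-colour observation, either of your two sketched routes closes immediately.
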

\begin{proof}
Suppose that $\pi=\pi(w)$ is the $(3,n)$-Dyck path corresponding to $w$.  If $\skips(w)=0$ then the result is obvious, so assume that $\skips(w)>0$.  
Since each $\skips$ corresponds to an unhighlighted letter in $w$ with a highlighted letter immediately to the left, there must be at least as many unhighlighted letters in $w$ as $\skips$.  Thus $\skips(w)\leq\area(w)$.

If $r_c$ is an unhighlighted letter in $w$ with a highlighted letter to the immediate left then every unhighlighted letter with a highlighted letter to the left must be colored $c$.

\emph{Case 1: Suppose that $c=2$}.  Every rank of color $1$ that lies to the right of $r_c$ is highlighted.  Thus the $\lceil n/3\rceil$ rightmost ranks, which are all colored 1, must be highlighted.  Since the number of cells of color 2 is $\lfloor n/3 \rfloor<\lceil n/3\rceil \leq d$ it must follow that $\skips(\pi)<\dinv(\pi)$.

\emph{Case 2:  Suppose that $c=1$}.  Then every unhighlighted letter with a highlighted letter immediately to the left must be colored 1 and the highlighted letter immediately to the left must be colored 2.  Thus $\skips$ corresponds to the number of letters of color 2 appearing to the left of unhighlighted letters of color 1.  Each such letter of color 2 must correspond to a cell in column 2 of $\pi$ that is above the path.  But these cells are in $\Dinv(\pi)$.  So $\skips(w)\leq\dinv(w)$.
\end{proof}

This leads us to the following proposition:
\begin{prop} Suppose that $\pi$ is a $(3,n)$-Dyck path and set 
\[ a=\area(\pi), \qquad d=\dinv(\pi), \qquad s=\skips(\pi). \]
The rank word obtained from the Dyck triple $(a,d,s)$ is the rank word of $\pi$.
\end{prop}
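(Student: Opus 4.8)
The plan is to first check that $(a,d,s)$ is a Dyck triple, and then to identify the rank word $w'$ produced by the construction algorithm from $(a,d,s)$ with the rank word $w$ of $\pi$; the identification will run through a cardinality argument backed by a computation of the statistics of algorithm outputs. To see that $(a,d,s)$ is a Dyck triple, apply the preceding proposition to $w$ and use $\area(\pi)=\area(w)$, $\dinv(\pi)=\dinv(w)$, $\skips(\pi)=\skips(w)$ to get $s\le a$ and $s\le d$. The positive ranks of the $(3,n)$-diagram number $(n-1)(m-1)/2=n-1$, and $\Area(\pi)$, $\Dinv(\pi)$, $\Skips(\pi)$ partition them, so $a+d+s=n-1$ and hence $a+d+s+1=n$, which is not divisible by $3$ by hypothesis. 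Thus $(a,d,s)$ is a Dyck triple, and by the validity proposition the algorithm returns a valid $(3,n)$-rank word $w'$; it remains to show $w'=w$.

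\emph{The algorithm is a bijection.} Let $\mathcal A$ be the map sending a Dyck triple to its constructed rank word; the preceding corollary shows $\mathcal A$ is injective. Substituting $a=s+a'$, $d=s+d'$ turns the Dyck-triple conditions into $a',d'\ge 0$ and $3s+a'+d'=n-1$, from which the number of Dyck triples $(a,d,s)$ with $a+d+s+1=n$ equals $\sum_{i=0}^{\lfloor n/3\rfloor}(n-3i)=\tfrac{(n+1)(n+2)}{6}$. This is the $(3,n)$-rational Catalan number $\tfrac{1}{n+3}\binom{n+3}{3}$, hence the number of $(3,n)$-Dyck paths, hence by the correspondences established above the number of $(3,n)$-rank words. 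So $\mathcal A$ is an injection between finite sets of equal size, i.e.\ a bijection, and in particular $w=\mathcal A(a',d',s')$ for a unique Dyck triple $(a',d',s')$.

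\emph{Statistics of algorithm outputs, and conclusion.} It now suffices to prove that $\area(\mathcal A(t))=a$, $\dinv(\mathcal A(t))=d$, $\skips(\mathcal A(t))=s$ for every Dyck triple $t=(a,d,s)$: applied to $\mathcal A(a',d',s')=w$ this forces $(a',d',s')=(\area(\pi),\dinv(\pi),\skips(\pi))=(a,d,s)$, whence $w=\mathcal A(a,d,s)=w'$, which is the claim. For the statistics claim, begin with the word that has the top $d$ ranks highlighted, where $\area=n-1-d$, $\skips=0$, $\dinv=d$. I would then establish two facts about a single pass of the loop: (i) the pass highlights exactly one rank that was not already highlighted, namely, when $r_c$ is the smallest unhighlighted rank having a highlighted rank immediately to its right, the largest rank below $r$ whose colour differs from $c$ is still unhighlighted (this uses the minimality of $r$, the upward-closedness of the highlighted set, and the fact that only two colours occur, and is essentially the accounting in the proof of the validity proposition); and (ii) the pass increases by exactly one the number of unhighlighted letters having a highlighted letter immediately to the left, namely the letter just above the newly highlighted rank $r''$, which has colour $c$ and lies at or below $r$ and so stays unhighlighted, while $r''$ itself was not such a letter before (its predecessor was unhighlighted, again by minimality of $r$ and upward-closedness). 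From (i), after $s$ passes $\mathcal A(t)$ has $d+s$ highlighted letters, so $\area(\mathcal A(t))=n-1-d-s=a$; from (ii) and the $\skips$-theorem of this section, $\skips(\mathcal A(t))=s$; and then $\dinv(\mathcal A(t))=(n-1)-a-s=d$.

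The main obstacle is facts (i)–(ii) above — verifying that each pass of the loop really does flip exactly one new rank and create exactly one new unhighlighted-with-highlighted-left-neighbour letter; the argument is elementary but needs care because of the interplay of the two colours and the mod-$n$ highlighting rule. A more computational alternative that avoids the Catalan-number count is to prove $w'=w$ directly: by Proposition \ref{classifyprop} the skip cells of $\pi$ either all lie immediately left of a highlighted colour-$2$ cell or none do (the two possibilities being mutually exclusive because of the leg inequalities there), and in each case one matches the greedy choices of the algorithm against the staircase boundary of the Ferrers diagram above $\pi$; the bookkeeping there is of the same nature as the obstacle described.
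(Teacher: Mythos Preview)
Your strategy is genuinely different from the paper's. The paper argues directly: writing $k,\ell$ for the number of cells above $\pi$ in columns $1$ and $2$, it splits into three cases ($k<n/3$; $k>n/3$ with $k-\ell>n/3+1$; $k>n/3$ with $k-\ell<n/3+1$), uses Proposition~\ref{classifyprop} to compute $\dinv(\pi)$ and $\skips(\pi)$ explicitly in terms of $k,\ell$, and then traces the algorithm step by step to check that it highlights exactly the top $k$ colour-$1$ and top $\ell$ colour-$2$ letters. Your route instead (a) counts Dyck triples and matches the total against the rational Catalan number to upgrade injectivity of $\mathcal A$ to bijectivity, and then (b) shows $\mathcal A$ preserves $(\area,\dinv,\skips)$, which forces $\mathcal A^{-1}(w)=(a,d,s)$.

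The counting in (a) is clean and correct, and the reduction in (b) to the two local facts (i)--(ii) is the right idea. The weakness is that (i) and (ii) are only sketched. For (ii) in particular you need that the \emph{left} neighbour $q$ of the newly highlighted rank $r''$ is still unhighlighted (so that no existing skip pair is destroyed). Your appeal to ``upward-closedness'' does not settle this directly: the highlighted set at an intermediate stage of the loop is not simply an up-set in the rank order, and whether $q$ is highlighted depends on the colour pattern of the word (the run of $\lceil n/3\rceil$ colour-$1$ letters on the right followed by alternation) and on whether $d\ge\lceil n/3\rceil$. Establishing that the colour $c$ of the pivot $r_c$ is constant across passes, and hence that all newly highlighted ranks share the opposite colour $c'$ and are the consecutive top colour-$c'$ letters below the initial block, is what makes (i)--(ii) go through; but verifying this invariant is essentially the same bookkeeping as the paper's three cases. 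So your approach trades the paper's explicit $(k,\ell)$ case analysis for an invariant about the algorithm's intermediate states, of comparable difficulty. Your final paragraph's ``computational alternative'' via Proposition~\ref{classifyprop} is, in fact, exactly the paper's proof.
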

\begin{proof}
To construct a $(3,n)$-Dyck path it is equivalent to choose a number of cells in the first column as well as a number of cells in the second column (subject to constraints) to be above the path.  This is essentially choosing a number of entries colored 1 as well as a number of entries colored 2 in the rank word to highlight.  Moreover, all of the highlighted entries of a specific color must be the rightmost entries, i.e. if $r_c$ is a highlighted rank and $t_c$ is another rank with $t>r$, then $t_c$ must be highlighted as well.  Suppose that there are $k$ cells above the path in the first column and $\ell$ cells above the path in the second column.

\emph{Case 1:  Suppose that $k<n/3$.}  We will show that $k=\dinv(\pi)$ and $\ell=\skips(\pi)$.  From Proposition \ref{classifyprop}, each cell in column one that has $\arm$ equal to zero will be in $\Dinv(\pi)$ because the $\leg$ is certainly less than $n/3$.  Every cell in column one that has $\arm$ equal to one is in $\Skips(\pi)$.  

Suppose $\ell>0$. Consider the upper left cell $(1,n)$. We have $\arm(1,n)=1$ and  $\leg(1,n)=k-1 <\frac{n}{3}-1$. From Proposition \ref{classifyprop}, $(1,n)\in\Skips(\pi).$  The other cells in column one with arm one have shorter legs so they will also lie in $\Skips(\pi)$.

In the rank word constructed from $(a,d,s)$ the rightmost $\dinv(\pi)=k$ entries will be highlighted, and they are all color 1.  Then the remaining rightmost 1-colored entries will be skipped and the rightmost $\skips(\pi)=\ell$ 2-colored entries will be highlighted.  This concludes the proof of this case.

\emph{Case 2:  Suppose that $k>n/3$ and $k-\ell >\frac{n}{3}+1$.}  From Proposition \ref{classifyprop},  $\skips(\pi)$ is equal to the number of cells with $\arm$ zero and whose $\leg$ is greater than $n/3$.  There are $k-\ell-\lceil n/3 \rceil$ such cells so $\skips(\pi)=k-\ell-\lceil n/3\rceil$ and $\dinv(\pi)=2\ell+\lceil n/3 \rceil$.

In the rank word constructed from $(a,d,s)$ there are $\lceil n/3\rceil$ entries with color 1 on the right and then the entries alternate in color, beginning with 2.  Thus we will highlight $\lceil n/3 \rceil$ color 1 entries, then $\ell$ entries with color 2 and $\ell$ entries with color 1.  Since the last entry we highlighted was 1, we will begin skipping color 2 entries and highlight $k-\ell-\lceil n/3\rceil$ color 1 entries.  Therefore the rightmost $\ell$ color 2 entries are highlighted and the rightmost $\lceil n/3\rceil+\ell+\left(k-\ell-\lceil n/3\rceil\right)=k$ color 1 entries are highlighted.  This completes the proof for this case.

\emph{Case 3:  Suppose that $k>n/3$ and $k-\ell<\frac{n}{3}+1$.}  From Proposition \ref{classifyprop},  $\skips(\pi)$ is equal to the number of cells with $\arm$ equal to one and whose $\leg$ is less than $\frac{n}{3}-1$.  There are $\ell-k+\lfloor n/3\rfloor$ of these.  Thus $\skips(\pi)=\ell-k+\lfloor n/3\rfloor$ and $\dinv(\pi)=2k-\lfloor n/3\rfloor$.

In the rank word constructed from $(a,d,s)$ there are $\lceil n/3\rceil$ color 1 entries on the right and then the entries alternate in color, beginning with 2.  Thus we will highlight $\lceil n/3\rceil$ color 1 entries.  Then we will highlight $2k-\lfloor n/3\rfloor - \lceil n/3\rceil=2k-2\cdot\lfloor n/3\rfloor -1$ entries: $k-\lfloor n/3 \rfloor$ color 2 entires and $k-\lfloor n/3 \rfloor -1$ color 1 ones. Then we will highlight $\ell-k+\lfloor n/3\rfloor$ color 2 entries at the same time skip $\ell-k+\lfloor n/3\rfloor$ color 1 entries. So we have highlighted $\lceil n/3 \rceil + k-\lfloor n/3\rfloor -1=k$ entries with color 1 and $k-\lfloor n/3\rfloor + \ell-k+\lfloor n/3 \rfloor=\ell$ entries with color 2.  This completes the proof.
\end{proof}

Therefore, distinct Dyck triples will construct distinct $(3,n)$-rank words and therefore distinct $(3,n)$-Dyck paths.  Similarly, if $\pi,\psi$ are three-column Dyck paths such that
\begin{align*} \area(\pi)&=\area(\psi), \\
\dinv(\pi)&=\dinv(\psi), \\
\skips(\pi)&=\skips(\psi), \end{align*}
then they would give the same Dyck triple, and therefore the same rank word, and so they would have to be identical Dyck paths.

\begin{remark} Suppose that $\pi$ is a $(3,n)$-Dyck path.  Since $\skips(\pi)\leq \area(\pi)$ and $\skips(\pi)\leq\dinv(\pi)$, the maximum value for $\skips(\pi)$ occurs when $\area(\pi)$ and $\dinv(\pi)$ are close together, yet minimal.  Thus $\skips(\pi)\leq \lfloor n/3\rfloor$.
\end{remark}

\subsection{Generating functions when $m=3$}

\begin{cor} Suppose that $n>1$ is an integer that isn't divisible by $3$.  Then
\[ W_{3,n}(b,q,t)=\sum_{i=0}^{\lfloor n/3 \rfloor} b^i\cdot s_{n-1-2i,i}(q,t), \]
where $\{s_\lambda\}_\lambda$ is the Schur basis of symmetric functions.
\end{cor}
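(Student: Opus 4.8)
The plan is to deduce the Corollary from the correspondence, built up in the previous subsection, between $(3,n)$-Dyck paths and Dyck triples, thereby reducing the whole statement to an elementary two-variable Schur polynomial identity. First I would record that, by those results, the assignment $\pi\mapsto(\area(\pi),\dinv(\pi),\skips(\pi))$ is a bijection from $(3,n)$-Dyck paths onto the set of Dyck triples $(a,d,s)$ with $a+d+s+1=n$ and $s\le\min\{a,d\}$ (the non-divisibility clause in the definition of a Dyck triple being automatic because $3\nmid n$). Combining this with the one-to-one correspondence between rank words and Dyck paths, and with the fact that $\area,\dinv,\skips$ agree on a rank word and its path, I obtain
\[ W_{3,n}(b,q,t)=\sum_{\substack{a+d+s=n-1\\ s\le\min\{a,d\}}} b^{s}q^{d}t^{a}. \]

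Next I would sort this sum by the value $s=i$. The constraints $a\ge i$, $d\ge i$, $a+d=n-1-i$ force $3i\le n-1$, hence (using $3\nmid n$) $i\le\lfloor n/3\rfloor$, which is exactly the range in the statement and is consistent with the remark that $\skips(\pi)\le\lfloor n/3\rfloor$; conversely each such $i$ is realized. Therefore
\[ W_{3,n}(b,q,t)=\sum_{i=0}^{\lfloor n/3\rfloor} b^{i}\Bigl(\sum_{\substack{a+d=n-1-i\\ a,\,d\ge i}} q^{d}t^{a}\Bigr). \]

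The main step is to identify the inner sum with $s_{n-1-2i,\,i}(q,t)$. Reindexing by $a=i+k$ gives $d=(n-1-2i)-k$ with $k$ running from $0$ to $n-1-3i$, so the inner sum becomes $\sum_{k=0}^{n-1-3i} q^{\,(n-1-2i)-k}\,t^{\,i+k}$. This is precisely the two-row bialternant formula
\[ s_{(\lambda_1,\lambda_2)}(q,t)=\frac{q^{\lambda_1+1}t^{\lambda_2}-q^{\lambda_2}t^{\lambda_1+1}}{q-t}=\sum_{k=0}^{\lambda_1-\lambda_2} q^{\lambda_1-k}t^{\lambda_2+k} \]
evaluated at $\lambda=(n-1-2i,\,i)$, which is a genuine partition exactly when $n-1-2i\ge i$, i.e. throughout our range of $i$. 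This establishes the Corollary; moreover, since $s_\lambda(q,t)=s_\lambda(t,q)$ for every $\lambda$, combining with $W_{3,n}(1,q,t)=C_{3,n}(q,t)$ from Theorem~\ref{wordtocatalan} simultaneously re-proves the $q,t$-symmetry of the $(3,n)$-rational Catalan polynomials.

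I do not expect a real obstacle here, as all the combinatorics has been carried out upstream; the only points demanding care are (i) confirming that the path-to-triple map is genuinely a statistic-preserving bijection in both directions — already the content of the cited propositions and the discussion closing that subsection — and (ii) the index bookkeeping when matching the inner sum to the Schur polynomial, in particular the endpoints $k=0$ and $k=n-1-3i$ and the degenerate case $n-1-2i=i$, so that no term is dropped or double-counted.
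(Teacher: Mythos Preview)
Your proposal is correct and follows essentially the same route as the paper: both rewrite $W_{3,n}(b,q,t)$ as a sum over Dyck triples via the bijection of the previous subsection, group by the value $s=i$, and identify the resulting inner sum with the two-variable Schur polynomial $s_{n-1-2i,i}(q,t)$ by a simple reindexing. The only cosmetic difference is that the paper parametrizes the inner sum by $j=d-i$ while you parametrize by $k=a-i$, and you spell out the bialternant formula and the range bound $i\le\lfloor n/3\rfloor$ a bit more explicitly.
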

\begin{proof} For each $i=0,\ldots,\lfloor n/3\rfloor$, let
\begin{align} a_j(i) & =n-1-2i-j\\
d_j(i) &=i+j. \end{align}
Then for each $0\leq j \leq n-1-3i$, the triple of non-negative integers $(a_j(i),d_j(i),i)$ satisfies
\[ a_j(i)+d_j(i)+i=n-1, \qquad \text{and}\qquad i\leq \min{\{a_j(i),d_j(i)\}}, \]
so it is a Dyck triple.
Therefore,
\begin{align} W_{3,n}(b,q,t)&=  \sum_{i=0}^{\lfloor n/3 \rfloor} b^i\cdot \sum_{j=0}^{n-1-3i} q^{d_j(i)}t^{a_j(i)} \\
& = \sum_{i=0}^{\lfloor n/3 \rfloor} b^i\cdot \sum_{j=0}^{n-1-3i} q^{i+j}t^{n-1-2i-j}. \end{align}
But $ \sum_{j=0}^{n-1-3i} q^{i+j}t^{n-1-2i-j}=s_{n-1-2i,i}(q,t)$.  This completes the proof.
\end{proof}

\begin{cor} Suppose that $n>1$ is an integer that isn't divisible by 3.  The $(3,n)$-rational $q,t$-Catalan polynomial is symmetric in $q$ and $t$.
\end{cor}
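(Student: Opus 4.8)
The plan is to derive this corollary directly from the previous one, which gives the explicit Schur expansion $W_{3,n}(b,q,t)=\sum_{i=0}^{\lfloor n/3\rfloor} b^i\cdot s_{n-1-2i,i}(q,t)$. First I would specialize $b=1$: by Theorem \ref{wordtocatalan} (the $m=3$ case of $W_{m,n}(1,q,t)=C_{m,n}(q,t)$), this yields
\[ C_{3,n}(q,t)=\sum_{i=0}^{\lfloor n/3\rfloor} s_{n-1-2i,i}(q,t). \]
So the entire claim reduces to showing that each Schur polynomial in two variables $s_{n-1-2i,i}(q,t)$ is symmetric in $q$ and $t$, and that a sum of such symmetric polynomials is symmetric — the latter being trivial.

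The key step, then, is the symmetry of $s_\lambda(q,t)$ for a two-row (equivalently two-variable) partition $\lambda=(\lambda_1,\lambda_2)$. This is a standard fact: Schur polynomials $s_\lambda(x_1,\dots,x_k)$ are symmetric functions in their variables by construction (e.g.\ via the bialternant formula, as a ratio of alternants, or as a sum over semistandard Young tableaux which is manifestly basis-independent under permuting the alphabet). In the present two-variable case one can even see it concretely: from the computation in the preceding proof, $s_{n-1-2i,i}(q,t)=\sum_{j=0}^{n-1-3i} q^{i+j}t^{n-1-2i-j}$, and substituting $e=n-1-3i$ this is $q^i t^i\sum_{j=0}^{e} q^j t^{e-j}$, where the inner sum $\sum_{j=0}^{e} q^j t^{e-j}=h_e(q,t)$ is visibly symmetric in $q$ and $t$ (it is palindromic in $j$). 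Hence each term $q^i t^i h_{n-1-3i}(q,t)$ is symmetric, and so is the sum over $i$.

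I would therefore present the proof in two or three sentences: invoke the previous corollary together with Theorem \ref{wordtocatalan} at $b=1$ to get the Schur expansion of $C_{3,n}(q,t)$; note that each $s_{n-1-2i,i}(q,t)$, being a Schur polynomial in two variables, is symmetric in those two variables (or cite the explicit palindromic form above); conclude that the finite sum $C_{3,n}(q,t)$ is symmetric in $q$ and $t$.

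The main obstacle here is essentially nil — it is a one-line consequence of machinery already in place. The only thing worth being careful about is the degenerate range: when $n-1-3i<0$ the corresponding term should be absent, but the summation index runs only to $\lfloor n/3\rfloor$, for which $n-1-3i\ge n-1-3\lfloor n/3\rfloor\ge 0$ since $3\nmid n$ forces $n-1-3\lfloor n/3\rfloor\in\{0,1\}$; so every term that appears is a genuine (nonnegative-degree) symmetric polynomial, and no edge case spoils the argument.
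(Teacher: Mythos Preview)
Your proposal is correct and follows essentially the same approach as the paper: both arguments combine the Schur expansion of $W_{3,n}$ from the preceding corollary with Theorem~\ref{wordtocatalan} and the symmetry of Schur polynomials in two variables. The only cosmetic difference is that the paper first observes $W_{3,n}(b,q,t)=W_{3,n}(b,t,q)$ and then specializes $b=1$, whereas you specialize first; your extra remarks on the explicit palindromic form and the edge case $i=\lfloor n/3\rfloor$ are sound but not needed.
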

\begin{proof}
Since Schur functions are symmetric functions,
\[ W_{3,n}(b,q,t)=\sum_{i=0}^{\lfloor n/3 \rfloor} b^i\cdot s_{n-1-2i,i}(q,t)=\sum_{i=0}^{\lfloor n/3 \rfloor} b^i\cdot s_{n-1-2i,i}(t,q) = W_{3,n}(b,t,q). \]
Theorem \ref{wordtocatalan} shows that $W_{3,n}(1,q,t)=C_{3,n}(q,t)$.  So
\[ C_{3,n}(q,t)=W_{3,n}(1,q,t)=W_{3,n}(1,t,q)=C_{3,n}(t,q). \]
\end{proof}

To construct a bijection on the set of $(3,n)$-Dyck paths that exchanges $\dinv$ with $\area$:
\begin{enumerate}
\item Let $\pi$ be a $(3,n)$-Dyck path.
\item Find the statistics $(\area(\pi),\dinv(\pi),\skips(\pi))$.
\item Use the $(3,n)$-rank word construction algorithm to find the $(3,n)$-rank word $w$ with Dyck triple
\[ a=\dinv(\pi), \qquad d=\area(\pi), \qquad\text{and}\qquad s=\skips(\pi)\;. \]
\item Find the unique $(3,n)$-Dyck path corresponding to $w$, $\tau=\Pi(w)$.
\end{enumerate}

\section{Acknowledgements}
We would like to thank Adriano Garsia and Jim Haglund for the motivation to work
on this problem. We wish to thank Emily Leven for inspiring conversations on this topic.  We are also grateful to the anonymous reviewers at the Formal Power Series and Algebraic Combinatorics 2015 conference for providing useful comments and papers to compare to our work.

\bibliographystyle{alpha}
\bibliography{catalanbib}

\end{document}